\documentclass[final,onefignum,onetabnum]{siamart190516}

\usepackage{amsmath,amscd,amssymb}
\usepackage{xspace}
\usepackage{cite,alltt}
\usepackage{enumitem}
\setlist[itemize]{topsep=2pt,parsep=0pt,partopsep=2pt}
\setlist[enumerate]{topsep=2pt,parsep=0pt,partopsep=2pt}

\usepackage{mathtools, color}
\usepackage{multirow}
\usepackage{rotating}
\usepackage{subfigure}
\usepackage{epsfig}
\usepackage{url}
\usepackage{graphicx}
\usepackage{mathdots}
\usepackage[english]{babel}
\usepackage{csquotes}
\usepackage{mathrsfs}
\usepackage{overpic}
\usepackage{algorithmic} 
\usepackage{algorithm}
\usepackage{blindtext}
\usepackage{mathrsfs}
\usepackage{hyperref}


\newcommand{\ep}{\varepsilon}

\newcommand{\vc}{\mathbf{c}}
\newcommand{\vd}{\mathbf{d}}
\newcommand{\ve}{\mathbf{e}}

\newcommand{\vn}{\mathbf{n}}

\newcommand{\vs}{\mathbf{s}}
\newcommand{\vst}{\widetilde{\mathbf{s}}}

\newcommand{\vu}{\mathbf{u}}

\newcommand{\vx}{\mathbf{x}}
\newcommand{\vy}{\mathbf{y}}
\newcommand{\vxn}{\mathbf{x}}

\newcommand{\vom}{\boldsymbol{\xi}}

\newcommand{\vxi}{\boldsymbol{\xi}}

\newcommand{\prad}{\rho}

\newcommand{\vH}{\mathbf{H}}

\newcommand{\tpsi}{\widetilde{\psi}}
\newcommand{\vI}{\mathbf{I}}

\newcommand{\rot}{\mathbf{L}}
\newcommand{\sgrad}{\mathbf{G}}

\newcommand{\M}{\mathcal{P}}

\newcommand{\Phidiv}{\Phi_{\rm div}}
\newcommand{\Phidivt}{\widetilde{\Phi}_{\rm div}}
\newcommand{\Phicurl}{\Phi_{\rm curl}}

\newcommand{\R}{\mathbb{R}}
\newcommand{\sphere}{\mathbb{S}}
\newcommand{\dist}{\textnormal{dist}}
\newcommand{\sdist}{\textnormal{dist}_{\surface}}

\definecolor{matred}{rgb}{0.6350, 0.0780, 0.1840}
\definecolor{matblue}{rgb}{0, 0.4470, 0.7410}
\definecolor{matpurp}{rgb}{0.4940, 0.1840, 0.5560}
\definecolor{matgreen}{rgb}{0, 0.5, 0}

\newcommand{\dfpuapprox}{\widetilde{\vs}}
\newcommand{\cfpot}{\varphi}
\newcommand{\dfpot}{\psi}

\newcommand{\localadjdfpot}{\widetilde{\dfpot}}

\newcommand{\blendedpotential}{\widetilde{\dfpot}}

\newcommand{\gluept}{\bar{\vx}}
\newcommand{\patch}{\Omega}
\newcommand{\glueptset}{\bar{X}}
\newcommand{\shift}{b}
\newcommand{\rhs}{c}
\newcommand{\res}{r}
\newcommand{\nodeset}{X}

\newcommand{\surface}{\mathcal{P}}
\newcommand{\surfacegrad}{\mathbf{G}}
\newcommand{\surfacecurl}{\mathbf{L}}

\newcommand{\bigO}{\mathcal{O}}
\newcommand{\ds}{\displaystyle}

\newcommand{\revision}[1]{{#1}}

\newcommand{\vertiii}[1]{{\left\vert\kern-0.25ex\left\vert\kern-0.25ex\left\vert #1 \right\vert\kern-0.25ex\right\vert\kern-0.25ex\right\vert}}

\newsiamremark{remark}{Remark}
\newsiamremark{hypothesis}{Hypothesis}
\crefname{hypothesis}{Hypothesis}{Hypotheses}
\newsiamthm{claim}{Claim}

\vspace{-3cm}
\headers{A PUM for div-free or curl-free RBF approximation}{K.~P. Drake, E.~J. Fuselier, and G.~B. Wright}
\title{A Partition of Unity Method for Divergence-free or Curl-free Radial Basis Function Approximation}
\author{Kathryn P. Drake\thanks{Department of Mathematics, Boise State University, Boise, ID (\email{KathrynDrake@u.boisestate.edu}, \email{gradywright@boisestate.edu})}
\and Edward J. Fuselier\thanks{Department of Mathematics, High Point University, High Point, NC (\email{efuselie@highpoint.edu})}
\and Grady B. Wright\footnotemark[1]}

\begin{document}
\maketitle

\begin{abstract}
Divergence-free (div-free) and curl-free vector fields are pervasive in many areas of science and engineering, from fluid dynamics to electromagnetism. A common problem that arises in applications is that of constructing smooth approximants to these vector fields and/or their potentials based only on discrete samples. Additionally, it is often necessary that the vector approximants preserve the div-free or curl-free properties of the field to maintain certain physical constraints. Div/curl-free radial basis functions (RBFs) are a particularly good choice for this application as they are meshfree and analytically satisfy the div-free or curl-free property. However, this method can be computationally expensive due to its global nature. In this paper, we develop a technique for bypassing this issue that combines div/curl-free RBFs in a partition of unity framework, where one solves for local approximants over subsets of the global samples and then blends them together to form a div-free or curl-free global approximant. The method is applicable to div/curl-free vector fields in $\R^2$ and tangential fields on two-dimensional surfaces, such as the sphere, and the curl-free method can be generalized to vector fields in $\R^d$. The method also produces an approximant for the scalar potential of the underlying sampled field.  We present error estimates and demonstrate the effectiveness of the method on several test problems.
\end{abstract}

\begin{keywords}
  divergence-free, solenoidal, curl-free, irrotational, partition of unity, potential, radial basis functions
\end{keywords}

\begin{AMS}
  65D12, 41A05, 41A30
\end{AMS}

\section{Introduction}
Approximating vector fields from scattered samples is a pervasive problem in many scientific applications, including, for example, fluid dynamics, meteorology, magnetohydrodynamics, electromagnetics, gravitational lensing, imaging, and computer graphics.  Often these vector fields have certain differential invariant properties related to an underlying physical principle.  For example, in incompressible fluid dynamics the velocity of the fluid is divergence-free (div-free) as a consequence of the conservation of mass.  Similarly, in electromagnetics the electric field is curl-free in the absence of a time varying magnetic field as a consequence of the conservation of energy.  Additionally, the fields may have properties of being tangential to a surface (e.g.,\ the sphere $\sphere^2$) and have a corresponding surface div-free or curl-free property, as occurs in many areas of geophysical sciences~\cite{FPLM18}.  In several of these applications it is necessary for the approximants to preserve these differential invariants to maintain certain physical constraints.  For example, in incompressible flow simulations using the immersed boundary method, excessive volume loss can occur if the approximated velocity field of the fluid is not div-free~\cite{BAO2017183}.

To enforce these differential invariants on the approximant, one cannot approximate the individual components of the field separately, but must combine them in a particular way.  One idea uses the property that div-free fields (in two dimensions) and curl-free fields can be defined in terms of a scalar potential (e.g., a stream function or electric potential).  These methods then compute an approximant for the potential of the field by solving a Poisson equation involving the divergence or curl of the sampled field~\cite{Bhatia2013}.  A separate idea is to use a vector basis for the approximant that satisfies the underlying differential invariant.  This paper develops a radial basis function (RBF) method that uses latter approach, but has similarities to the former.

RBFs are a main tool for scattered data approximation~\cite{MeshFree_Mat,WendlandScatteredData,FFBook}.  In the early 1990s, researchers began to focus on the problem of developing vector RBF interpolants for div-free fields that analytically satisfy the div-free constraint~\cite{AmodeiBenbourhim_1991,Handscomb_1992_solenoidalTPS,NarcowichWard1994:GenHermite}.  The idea, as presented in~\cite{NarcowichWard1994:GenHermite}, is to use linear combinations of shifts of a matrix-valued kernel, whose columns satisfy the div-free property, to interpolate the samples of given field.  Since these kernels are constructed from scalar-valued RBFs, they are referred to as div-free RBFs.  These ideas were later extended to curl-free fields in~\cite{DoduRabut_2004_DFCFInterp,Fuselier2008:VectorError}.  Further extensions of the idea to vector fields tangential to a two-dimensional surface (e.g.,\ $\sphere^2$) that are surface div-free or curl-free were given in~\cite{NarcWardWright}.  Some applications of these div/curl-free RBFs can, for example, be found in~\cite{lowitzsch05-3,HLWHAE2012,Wendland2009:DivFreeStokes,shankar_2018,CoeEtAl2008,FUSELIER201641,MITRANO201594}.

There are, however, issues with scaling div/curl-free RBF interpolation to large data sets.  For a data set with $N$ scattered nodes $\nodeset=\{\vx_j\}_{j=1}^N$, the method requires solving a $dN$-by-$dN$ linear system, where $d=2,3$ is the dimension of the underlying domain.  Additionally, each evaluation of the resulting interpolant involves $dN$ terms.  If the div/curl-free RBFs are constructed from scalar-valued RBFs with global support, then the linear system is dense and not well suited to iterative methods.  To ameliorate these issues, a multilevel framework has been developed for compactly supported div/curl-free RBFs in~\cite{Farrell2016}.  However, we take a different approach to reducing the computational cost using the partition of unity method (PUM)~\cite{LazzaroMontefusco,wendland_2002,MeshFree_Mat,cavoretto_2010,Larsson2017}.

In RBF-PUM, one only needs to solve for local approximants over small subsets of the global data set and then blend them together to form a smooth global approximant.  A particular challenge with extending this idea to div/curl-free RBFs is in enforcing that the global approximant is analytically div/curl-free.  To overcome this challenge, we use the local div/curl-free RBFs to obtain local approximants to scalar potentials for the field and then blend these together to form a global scalar potential for the entire field.  A div/curl-free vector approximant is then obtained by applying the appropriate differential operator to the global scalar potential. The method as presented here will only work for fields that can be defined by scalar potentials, which includes div/curl-free vector fields in $\R^2$, surface div/curl-free tangential fields on two-dimensional surfaces, and curl-free fields in $\R^d$, but not div-free fields in $\R^3$.  However, there are several benefits of the method.  First, for node sets $X$ that are quasiuniform, the algorithm parameters can be chosen to produce global approximants to the field in $\bigO(N\log N)$ operations.  Second, we have error estimates showing the method can give high rates of convergence, and numerical evidence that rates faster than algebraic with increasing $N$ are possible.  Unlike the method from~\cite{Farrell2016}, these convergence rates are possible with the fixed complexity of $\bigO(N\log N)$.  Finally, a global approximant for the scalar potential is given directly from the samples without having to compute derivatives of the sampled field or solving a Poisson problem.  

As far as we are aware, the only other computationally scalable div-free approximation technique for scattered data is the div-free moving least squares (MLS) method from~\cite{TRASK2018310}.  The method is used for generating finite difference type discretizations for Stokes' equations.  While it worked quite successfully for this application, it can be computationally expensive for more general approximation problems, since it requires solving a new (small) linear system for each evaluation point. For the method we develop, the (small) linear systems are independent of the evaluation points.  \revision{Additionally, the div-free MLS method does not directly allow the potential for the field to also be approximated.}

The rest of the paper is organized as follows.  In the next section we introduce some background material necessary for the presentation of the method.  Section 3 contains a review of PUM  and then presents the div/curl-free RBF-PUM.  Error estimates for the new method are presented in Section 4.  Section 5 contains numerical experiments demonstrating the convergence rates of the method on three model problems.
The final section contains some concluding remarks.

\section{Div/Curl-free RBFs}\label{sec:review}

We review the generalized vector RBF techniques for reconstructing vector fields below, first for div-free fields and then for curl-free fields.  In both cases, we focus on approximations of tangential vector fields on smooth, orientable, surfaces embedded in $\R^3$ (which includes $\R^2$ and $\sphere^2$).  In the curl-free case the method extends trivially to $\R^d$.  Before discussing these two techniques, we introduce some notation and review some relevant background material.

\subsection{Notation and preliminaries}
Let $\M$ denote a smooth, orientable surface embedded in $\R^3$, possibly with a boundary, and let $\vn\in\R^3$ denote the unit normal vector to $\M$ expressed in the Cartesian basis.  When discussing tangential vector fields on $\M$, we use the terms divergence and curl to be tacitly understood to refer to surface divergence and surface curl for $\M$.  The surface curl (or \textit{rot}) operator $\rot$ and the surface gradient operator $\sgrad$ play a central role in defining div-free and curl-free tangential fields on $\M$.  We can express these operators in extrinsic (Cartesian) coordinates as follows:
\[\rot = \vn \times \nabla,\quad\quad \sgrad = (I - \vn\vn^T)\nabla,\]
where $\nabla$ is the standard $\R^3$ gradient, and $I$ is the $3$-by-$3$ identity matrix. It is \revision{a well known consequence of Poincar\'e's Lemma} that div-free and curl-free fields are locally images of these operators\revision{\cite{DoCarmo}\footnote{Poincar\'e's Lemma is typically given in terms of the exterior derivatve operator $d$. In this case applying the \emph{Hodge star} operator * to $\mathbf{u}$ before applying Poincar\'e's Lemma gives the div-free result. For the curl-free result, one starts with $*d\vu = 0$ and applying the Hodge star operator to this allows one to apply the lemma.}} 

\begin{proposition}\label{thm:scalar_pot2d}
Let $\vu$ be a tangential vector field defined on $\M$ then 
\begin{enumerate}
\item $\vu$ is div-free iff \revision{for each point $\vx\in \M$ there exists a neighborhood $U\subset \M$ and a scalar potential $\dfpot:U\longrightarrow\R$} such that $\vu=\rot(\dfpot)$
\item $\vu$ is curl-free \revision{for each point $\vx\in \M$ there exists a neighborhood $U\subset \M$ and a scalar potential $\cfpot:U\longrightarrow\R$} such that $\vu=\sgrad(\cfpot)$
\end{enumerate}
\end{proposition}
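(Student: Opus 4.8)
The plan is to transcribe the two vector-calculus conditions into the language of differential forms on the $2$-manifold $\M$ and then apply the Poincar\'e Lemma, as anticipated in the footnote above. The two ``only if'' directions carry the real content; the two ``if'' directions reduce immediately to the identity $d\circ d = 0$.

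First I would fix the dictionary between the extrinsic operators and the intrinsic form operations. Using the Riemannian metric that $\M$ inherits from $\R^3$, together with the orientation determined by $\vn$, I associate to the tangential field $\vu$ the metrically dual $1$-form $\omega=\vu^\flat$, and I note that the Hodge star $\ast$ on $1$-forms is the quarter-turn realized extrinsically by $\vu\mapsto\vn\times\vu$. The identities I would verify are: the scalar surface curl of $\vu$ is, up to sign, $\ast d\omega$, so that $\vu$ is curl-free iff $d\omega=0$; and the surface divergence of $\vu$ is, up to sign, $\ast d(\ast\omega)$, so that $\vu$ is div-free iff $d(\ast\omega)=0$. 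I would also record that $\sgrad(\cfpot)$ is dual to $d\cfpot$ and that $\rot(\dfpot)$ is dual to $\ast d\dfpot$, using $\ast\ast=-1$ on $1$-forms in dimension two.

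With this dictionary the equivalences are short. For the curl-free case, $d\omega=0$; since every $\vx\in\M$ lies in a coordinate neighborhood $U$ diffeomorphic to an open disk, hence contractible, the Poincar\'e Lemma produces a scalar $\cfpot$ on $U$ with $\omega=d\cfpot$, i.e.\ $\vu=\sgrad(\cfpot)$. For the div-free case, $d(\ast\omega)=0$, so Poincar\'e gives a scalar $\dfpot$ with $\ast\omega=d\dfpot$ on $U$; applying $\ast$ and using $\ast\ast=-1$ returns $\omega=-\ast d\dfpot$, i.e.\ $\vu=\rot(\dfpot)$ after matching the orientation sign. The converses are immediate: $\vu=\sgrad(\cfpot)$ gives $\omega=d\cfpot$ and hence $d\omega=0$, while $\vu=\rot(\dfpot)$ gives $\ast\omega=\pm d\dfpot$ and hence $d(\ast\omega)=0$.

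The one step demanding genuine care, and the main obstacle, is this translation layer: confirming that $\rot=\vn\times\nabla$ and $\sgrad=(I-\vn\vn^T)\nabla$, restricted to $\M$, coincide with $\ast d$ and $d$ under consistent sign and orientation conventions. Once that is settled, the remaining input is purely topological and entirely standard, namely contractibility of a chart together with the Poincar\'e Lemma, and no estimates are required. As a fallback that sidesteps the abstract machinery, I would instead pass to an isothermal (conformal) chart, in which the induced metric is a positive scalar multiple of the Euclidean metric; there the surface operators reduce to their planar counterparts up to the conformal factor, and the classical existence of a stream function (div-free) and of a potential function (curl-free) on a simply connected planar domain applies directly.
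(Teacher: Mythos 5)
Your proposal is correct and follows essentially the same route as the paper, which does not spell out a proof at all but simply cites Poincar\'e's Lemma together with the Hodge-star translation sketched in its footnote (apply $\ast$ to $\vu$ for the div-free case, and note $\ast d\vu^\flat = 0$ reduces to $d\vu^\flat = 0$ for the curl-free case). Your write-up merely fills in the details of exactly that argument---the metric dual, the identification of $\rot$ and $\sgrad$ with $\ast d$ and $d$, and contractibility of a chart---and any sign discrepancies are absorbed by replacing the potential with its negative.
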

\noindent \revision{Note that since $\rot$ and $\sgrad$ only annihilate constant functions along $\M$, the scalar potentials are unique up to the addition of a constant. 

  The present method relies on this property as it solves for scalar potentials on overlapping patches covering the domain of interest.  Since each of these potentials is unique up to a constant, a straightforward procedure can be derived to determine these values so that the potentials can be shifted to agree over the domain.  In three dimensions, div-free vector fields have vector potentials unique up to the addition of the gradient of a harmonic scalar function, and it not clear to us how to adapt the current method to this situation.}  However, the method will be applicable to curl-free fields in higher dimensions since a vector field $\vu$ on $\R^d$ is curl-free if and only if $\vu = \nabla\cfpot$ for some scalar potential. 

In what proceeds, we use the following notation for the $\rot$ operator:
\begin{align}
\rot = 
\underbrace{
\begin{bmatrix}
0 & -a_3 & a_2 \\
a_3 & 0 & -a_1 \\
-a_2 & a_1 & 0
\end{bmatrix}}_{\ds Q_{\vx}}
\nabla,
\label{eq:rot}
\end{align}
where $\vn = (a_1,a_2,a_3)$ is the unit normal to $\M$ at $\vx$.  Note that applying $Q_{\vx}$ to a vector in $\R^3$ gives the cross product of $\vn$ with that vector.  Similarly, we express $\sgrad$ as
\begin{align}
\sgrad = P_{\vx}\nabla,
\label{eq:sgrad}
\end{align}
where $P_{\vx} = \vI - \vn\vn^T$ projects any vector at $\vx$ on $\M$ into a plane tangent to $\M$ at $\vx$.

Two important cases of $\M$ are $\M=\R^2$ and $\M=\sphere^2$.  For the former case, the unit normal is independent of its position and is typically chosen as $\vn = (0,0,1)$.  Using this with \eqref{eq:rot} and \eqref{eq:sgrad}, leads to the standard definition for these operators for vector fields on $\R^2$:
\begin{align}
\rot = \begin{bmatrix} -\partial_y \\ \partial_x \\ 0 \end{bmatrix}\; \text{and}\; \sgrad = \begin{bmatrix} \partial_x \\ \partial_y \\ 0 \end{bmatrix},  \label{eq:rot2d}
\end{align}
which can be truncated to remove the unnecessary third component.  For $\M=\sphere^2$, the unit normal at $\vx$ is $\vn=\vx$, but $\rot$ and $\sgrad$ do not simplify beyond this.


\subsection{Div-free RBF interpolation}
Div-free vector RBF interpolants are similar to scalar RBF interpolants in the sense that one constructs them from linear combinations of shifts of a kernel at each of the given data sites. The difference between the approaches is that in the vector case one uses a \emph{matrix-valued kernel} whose columns are div-free.  For the sake of brevity, we give the final construction of these kernels and \revision{refer the reader} to~\cite{NarcWardWright} for a rigorous derivation. For more information on scalar-valued RBFs, which we do not discuss here, see any of the books~\cite{MeshFree_Mat,WendlandScatteredData,FFBook}.

Let $\phi:\R^3\times\R^3\longrightarrow\R$ be a radial kernel in the sense that $\phi(\vx,\vy)=\eta(\|\vx-\vy\|)$, for some $\eta:[0,\infty)\longrightarrow\R$, where $\|\cdot\|$ is the vector $2$-norm.  It is common in this case to simply write $\phi(\vx,\vy) = \phi(\|\vx-\vy\|)$. Supposing $\phi$ has two continuous derivatives, then the matrix kernel $\Phidiv$ is constructed using the operator $\rot$ in \eqref{eq:rot} as
\revision{
\begin{equation}
\begin{aligned}
		\Phidiv(\vx,\vy) = \rot_{\vx}^{\phantom{T}} \rot_{\vy}^{T}\phi\left(\|\vx-\vy\|\right) &= 
		Q_{\vx}\left(\nabla_{\vx}^{\phantom{T}} \nabla_{\vy}^T\phi\left(\|\vx-\vy\|\right)\right)Q_{\vy}^T \\ 
		& = Q_{\vx}\left(\nabla\nabla^T\phi\left(\|\vx-\vy\|\right)\right)Q_{\vy},
\end{aligned}
\label{eq:phidvi}
\end{equation}
}
where the subscripts in the differential operators indicate which variables they operate on and, for simplicity, no subscript means they operate on the $\vx$ component. Here we have used the fact that the matrix $Q_{\vy}$ in \eqref{eq:rot} is skew-symmetric and  $\nabla_{\vy}^T\phi\left(\|\vx-\vy\|\right) = -\nabla^T\phi\left(\|\vx-\vy\|\right)$.  For any $\vc\in\R^3$ and fixed $\vy\in\M$, the vector field $\Phidiv(\vx,\vy)\vc$ is tangent to $\M$ and div-free in $\vx$, which follows from Proposition \ref{thm:scalar_pot2d} since
\begin{align}
\Phidiv(\vx,\vy)\vc = Q_{\vx}\nabla \left(\nabla^T\phi\left(\|\vx-\vy\|\right)Q_{\vy}\vc\right) = \rot(\dfpot(\vx)),
\label{eq:PhiDivPot}
\end{align}
\revision{where $\dfpot$ is the potential for $\Phidiv(\vx,\vy)\vc$.}
The second argument of $\Phidiv$ acts as a shift of the kernel and indicates where the field $\Phidiv\vc$ is ``centered.''

An interpolant to a div-free tangential vector field $\vu:\M\longrightarrow\R^3$ sampled at distinct points $\nodeset=\{\vxn_j\}_{j=1}^N\subset\M$ can be constructed using $\Phidiv$ as follows:
\begin{equation}
	\vs(\vx)=\sum_{j=1}^N\Phidiv(\vx,\vxn_j)\vc_j,
	\label{eq:FullDivInterp}
\end{equation}
where the coefficients $\vc_j \in \R^3$ are tangent to $\M$ at $\vxn_j$ (this is necessary to make the interpolation problem well-posed as discussed below) and are chosen so that $\vs\bigr|_{\nodeset} = \vu\bigr|_{\nodeset}$.  We refer to \eqref{eq:FullDivInterp} as a div-free RBF interpolant.  

Instinctively, one may try to solve for the expansion coefficients in \eqref{eq:FullDivInterp} by imposing  $\vs(\vxn_j) = \vu_j$, $j=1,\ldots,N$, where $\vu_j = \vu(\vxn_j)$.  However, this will lead to a singular system of equations since each $\vu_j$ can be expressed using only two degrees of freedom rather than three.  To remedy this, let $\{\vd_j,\ve_j,\vn_j\}$ be orthonormal vectors at the node $\vxn_j$, where  $\vn_j$ is the outward normal to $\M$, $\ve_j$ is a unit tangent vector to $\M$, and $\vd_j=\vn_j\times \ve_j$.  Since $\vu_j$ is tangent to $\M$ we can write it in this basis as $
\vu_j = \gamma_j \vd_j+\delta_j\ve_j$, where $\gamma_j=\vd_j^T\vu_j$ and $\delta_j=\ve_j^T\vu_j$.
We may also express each tangent $\vc_j$ as $\vc_j = \alpha_j\vd_j+\beta_j\ve_j$, which leads us to  express \eqref{eq:FullDivInterp} as 
\begin{align}
	\vs(\vx)=\sum_{j=1}^N\Phidiv(\vx,\vxn_j)\left[\alpha_j\vd_j+\beta_j\ve_j\right],
	\label{eq:rbfvsinterp}
\end{align}
and to write the interpolation conditions as $\vd_i^T\vs(\vxn_i) = \gamma_i$ and  $\ve_i^T\vs(\vxn_i) = \delta_i$.  This leads to the $2N$-by-$2N$ system of equations
\begin{align}
	\sum_{j=1}^N\underbrace{\left(\begin{bmatrix}\vd_i^T\\
			\ve_i^T\end{bmatrix}\Phidiv(\vxn_i,\vxn_j)\begin{bmatrix}\vd_j &
			\ve_j\end{bmatrix}\right)}_{\ds A^{(i,j)}}\begin{bmatrix}\alpha_j\\
		\beta_j\end{bmatrix}=\begin{bmatrix}\gamma_i\\
		\delta_i\end{bmatrix}, \quad 1\leq i \leq N.
	\label{eq:divfree_linsys}
\end{align}
The interpolation matrix that arises from this system (with its $(i,j)^{\rm th}$ $2$-by-$2$ block given by $A^{(i,j)}$) is positive definite if $\Phidiv$ is constructed from an appropriately chosen scalar-valued RBF (e.g., a positive definite $\phi$)~\cite{NarcWardWright}.


When $\M=\R^2$, the div-free RBF interpolant can be simplified considerably since in this case we can choose $\vd_j = (1,0,0)$ and $\ve_j = (0,1,0)$ and use \eqref{eq:rot2d} for defining $\Phidiv$.  Using this in \eqref{eq:rbfvsinterp} and truncating the unnecessary third component of the vector interpolant (since it is always zero) gives the expansion
\begin{align}
\tilde{\vs}(\vx) = \sum_{j=1}^N \Phidivt(\vx,\vxn_j)\tilde{\vc}_j,
\label{eq:2dDivInterp}
\end{align}
where $\tilde{\vs},\tilde{\vc}_j\in\R^2$,  and 
\begin{align*}
\Phidivt(\vx,\vxn_j) = \begin{bmatrix}-\partial_{yy} & \partial_{xy} \\ \partial_{xy} & -\partial_{xx} \end{bmatrix}\phi(\|\vx-\vxn_j\|).
\end{align*}
This expression for $\Phidivt$ can be written as $\Phidivt = -I\Delta \phi + \nabla\nabla^T\phi$, which is the standard way to express div-free kernels for general $\R^d$~\cite{Fuselier2008:VectorError}.

An important consequence from the construction of the div-free RBF interpolant \eqref{eq:FullDivInterp} is that we can extract a scalar potential $\dfpot$ for the interpolated field.  Using \eqref{eq:PhiDivPot} for $\Phidiv$ in \eqref{eq:FullDivInterp} we have
\begin{align}
\vs(\vx)=\sum_{j=1}^N\Phidiv(\vx,\vxn_j)\vc_j = \underbrace{Q_{\vx}\nabla}_{\ds \rot}\biggl(\underbrace{\sum_{j=1}^N\nabla^T\phi\left(\|\vx-\vxn_j\|\right)Q_{\vx_j}\vc_j}_{\ds \dfpot(\vx)}\biggr) = \rot(\dfpot(\vx)).
\label{eq:rbf_sf}
\end{align}
\revision{This potential will play a crucial role in developing the PUM in Section \ref{sec:pum}}.

\subsection{Curl-free RBF interpolation}
Curl-free vector RBF interpolants are constructed in a similar fashion to the div-free ones, the only difference being that $\sgrad$ is applied instead of $\rot$ in the construction of the matrix kernel. Given a scalar RBF $\phi$ and using a derivation similar to \eqref{eq:phidvi}, $\Phicurl$ is given as
\begin{equation}
\begin{aligned}
		\Phicurl(\vx,\vy) = \sgrad_{\vx} \sgrad_{\vy}^{T}\phi\left(\|\vx-\vy\|\right) &= -P_{\vx}\left(\nabla \nabla^T\phi\left(\|\vx-\vy\|\right)\right)P_{\vy},
\end{aligned}
\end{equation}
where we have used the fact that the $P_{\vx}$ matrix in \eqref{eq:sgrad} is symmetric.  For any $\vc\in\R^3$ and fixed $\vy\in\M$, the vector field $\Phicurl(\vx,\vy)\vc$ is tangential to $\M$ and curl-free in $\vx$.  This follows from Proposition \ref{thm:scalar_pot2d} since
\begin{align}
\Phicurl(\vx,\vy)\vc = P_{\vx}\nabla \left(-\nabla^T\phi\left(\|\vx-\vy\|\right)P_{\vy}\vc\right) = \sgrad(\cfpot(\vx)),
\label{eq:PhiCurlPot}
\end{align}
\revision{where $\cfpot$ is the potential for $\Phicurl(\vx,\vy)\vc$.}  As with the div-free kernel \eqref{eq:PhiDivPot}, the second argument of $\Phicurl$ acts as a shift of the kernel and indicates where the field $\Phicurl\vc$ is ``centered''.

Interpolants to a curl-free tangential vector field $\vu:\M\longrightarrow\R^3$ sampled at distinct points $X=\{\vxn_j\}_{j=1}^N\subset\M$ are constructed from $\Phicurl$ as 
\begin{equation}
	\vs(\vx)=\sum_{j=1}^N\Phicurl(\vx,\vxn_j)\vc_j,
	\label{eq:FullCurlInterp}
\end{equation}
where the coefficients $\vc_j \in \R^3$ are tangent to $\M$ at $\vxn_j$ and are chosen so that $\vs\bigr|_\nodeset = \vu\bigr|_\nodeset$. The procedure for determining these coefficients is identical to the div-free case, one just needs to replace $\Phidiv$ with $\Phicurl$ in \eqref{eq:rbfvsinterp} \& \eqref{eq:divfree_linsys}.  The matrix from the linear system \eqref{eq:divfree_linsys} with $\Phicurl$ is similarly positive definite for the same $\phi$. Further, a scalar potential $\cfpot$ can also be extracted from the curl-free field \eqref{eq:FullCurlInterp} using \eqref{eq:PhiCurlPot}:
\begin{align}
\vs(\vx)= \underbrace{P_{\vx}\nabla}_{\ds \sgrad}\biggl(\underbrace{-\sum_{j=1}^N\nabla^T\phi\left(\|\vx-\vxn_j\|\right)P_{\vxn_j}\vc_j}_{\ds \cfpot(\vx)}\biggr) = \sgrad(\cfpot(\vx)).
\label{eq:rbf_vp}
\end{align}

In the Euclidean case $\R^d$, the curl-free kernel is simply given as 
$\Phicurl(\vx,\vy) = -\nabla\nabla^T \phi(\|\vx-\vy\|)$~\cite{Fuselier2008:VectorError}, 
where $\nabla$ is the $d$-dimensional gradient. The interpolation conditions $\vs\bigr|_X = \vu\bigr|_X$ also lead to the simplified linear system for the expansion coefficients $\vc_j\in\R^d$:
\begin{align}
\sum_{j=1}^N\Phicurl(\vxn_i,\vxn_j)\vc_j = \vu_i,\; i=1,2,\ldots,N,
\label{eq:curlfree_linsys}
\end{align}
which is $dN$-by-$dN$.  A scalar potential $\cfpot$ for the vector interpolant can be extracted as
\begin{align}
\vs(\vx)= \nabla\biggl(\underbrace{-\sum_{j=1}^N\nabla^T\phi\left(\|\vx-\vxn_j\|\right)\vc_j}_{\ds \cfpot(\vx)}\biggr).
\label{eq:CurlInterp_Rd}
\end{align}

\section{A div-free/curl-free partition of unity method}\label{sec:pum}
The cost associated with solving the linear systems \eqref{eq:divfree_linsys} and \eqref{eq:curlfree_linsys} is $\mathcal{O}(N^3)$, which is prohibitively high when the number of nodes $N$ in $\nodeset$ is large. In this section, we develop a partition of unity method (PUM) that requires solving several linear systems associated with subsets $X_\ell$ of $\nodeset$ with $n_\ell << N$ nodes, which reduces the computational cost significantly regardless of the nature of the RBF used.

\subsection{Partition of unity methods}
Let $\Omega\subset\R^d$ be an open, bounded domain of interest for approximating some function $f:\Omega\longrightarrow\R$. Let $\Omega_1,\dots,\Omega_M$ be a collection of distinct overlapping patches that form an open cover of $\Omega$, i.e., $\cup_{\ell=1}^M \Omega_\ell \supseteq \Omega$, and let the overlap between patches be limited such that at most $K << M$ patches overlap at any given point $\vx\in\Omega$.  For each $\ell=1,\ldots,M$, let $w_\ell:\Omega_\ell\longrightarrow[0,1]$ be a weight function such that $w_\ell$ is compactly supported on $\Omega_\ell$ and the set of weight functions $\{w_\ell\}$ have the property that $\sum_{\ell=1}^{M} w_\ell \equiv 1$.
Suppose $s_\ell$ is some approximation to $f$ on each patch $\Omega_\ell$. The partition of unity approach of Babu\v{s}ka and Melenk~\cite{Babuska_PU} is to form an approximant $s$ to $f$ over the whole domain $\Omega$ by ``blending'' the local approximants $s_\ell$ with $w_\ell$ via $s = \sum_{\ell=1}^M w_\ell s_\ell$.
%

When samples of $f$ are given at $N$ ``scattered'' nodes $\nodeset=\{\vxn_j\}_{j=1}^N\subset\Omega$, RBF interpolants are a natural choice for the local approximants $s_\ell$, as pointed out in~\cite{Babuska_PU}.  RBF-PUM was first explored for interpolation in 2002 by Wendland~\cite{wendland_2002} and Lazzaro and Montefusco~\cite{LazzaroMontefusco}, and then later in 2007 by Fasshauer~\cite[Ch.\ 29]{MeshFree_Mat}.  More recent work has explored various aspects of the method in terms of applications, methods, and implementations, especially by Cavoretto, De Rossi, and colleagues (e.g.,~\cite{cavoretto_15,cavoretto_2019,CaDeRoPe:2015}), and also extensions to problems on the sphere~\cite{cavoretto_2010,shankar_2018}.  Additionally, the method has been adapted for approximating the solution of partial differential equations (e.g.,~\cite{Safdari-Vaighani2015,Shcherbakov2016,Larsson2017,KevinThesis}).

Common choices for the patches in RBF-PUM are disks for problems in $\R^2$, spherical caps for problems on $\mathbb{S}^2$, and balls for problems in $\R^3$, and these are the choices we use throughout this paper.  Figure \ref{fig:RBF_PU} gives an example of a set of patches for a problem in $\R^2$.  Techniques for choosing the patches are discussed in, e.g.,~\cite{CaDeRoPe:2015,Larsson2017,shankar_2018} (see Section \ref{sec:implementation} for more discussion).  \revision{Other choices for patches commonly used in PUM methods are rectangles and procedures for generating these can be found, for example, in~\cite{GriebelSchweitzer2002}.}

Based on the choices of patches, the weight functions $w_\ell$ can be constructed using Shepard's method as follows.  Let $\kappa:\mathbb{R}^{+}\rightarrow\mathbb{R}$ have compact support over the interval $[0,1)$.  For each patch $\Omega_\ell$, let $\vom_\ell$ denote its center and $\prad_\ell$ denote its radius, and define $\kappa_\ell(\vx) := \kappa\left(\|\vx-\vom_\ell\|/\prad_\ell\right)$. The weight functions are then given by
\begin{equation}
w_\ell(\vx) = \kappa_\ell(\vx)/\ds \sum_{j=1}^{M} \kappa_j(\vx),\; \ell=1,\ldots,M.\nonumber
\end{equation}
Note that each $w_\ell$ is only supported over $\Omega_\ell$ and that the summation on the bottom only involves terms that are non-zero over patch $\Omega_\ell$, which is bounded by $K$.
Figure~\ref{fig:RBF_PU} (b) illustrates one of these weights functions for the example domain in part (a), where $\kappa$ is chosen as the $C^1$ quadratic $B$-spline
\begin{align}
\kappa(r) = \begin{cases} 1 - 3r^2, & 0\leq r \leq \frac{1}{3}, \\ \frac{3}{2}(1 - r)^2, & \frac{1}{3}\leq r \leq 1. \end{cases}
\end{align}
This is the weight function we use throughout the paper.
\begin{figure}[h]
	\centering
	\begin{tabular}{cc}
	\includegraphics[width=0.4\textwidth]{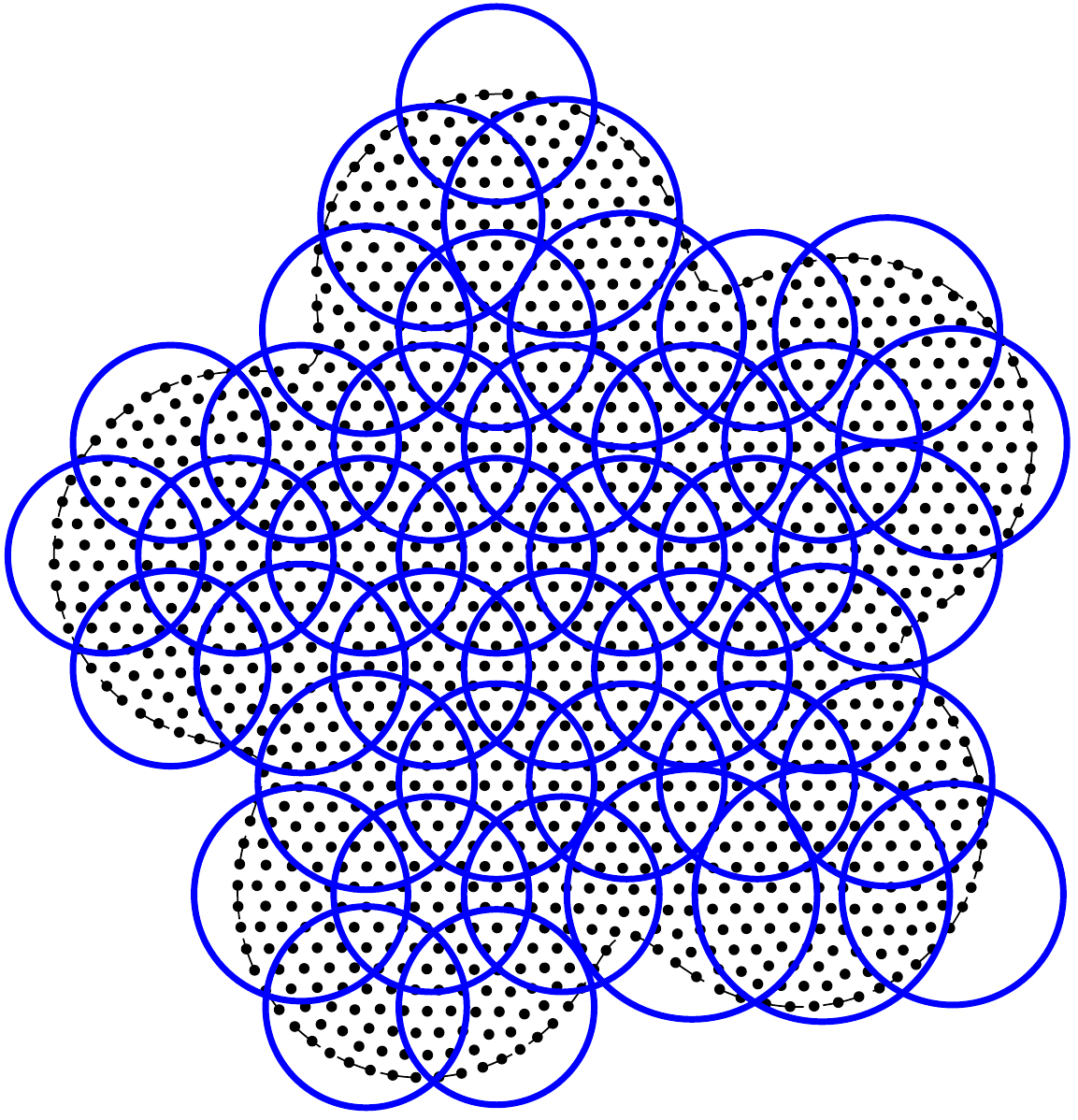} & \includegraphics[width=0.4\textwidth]{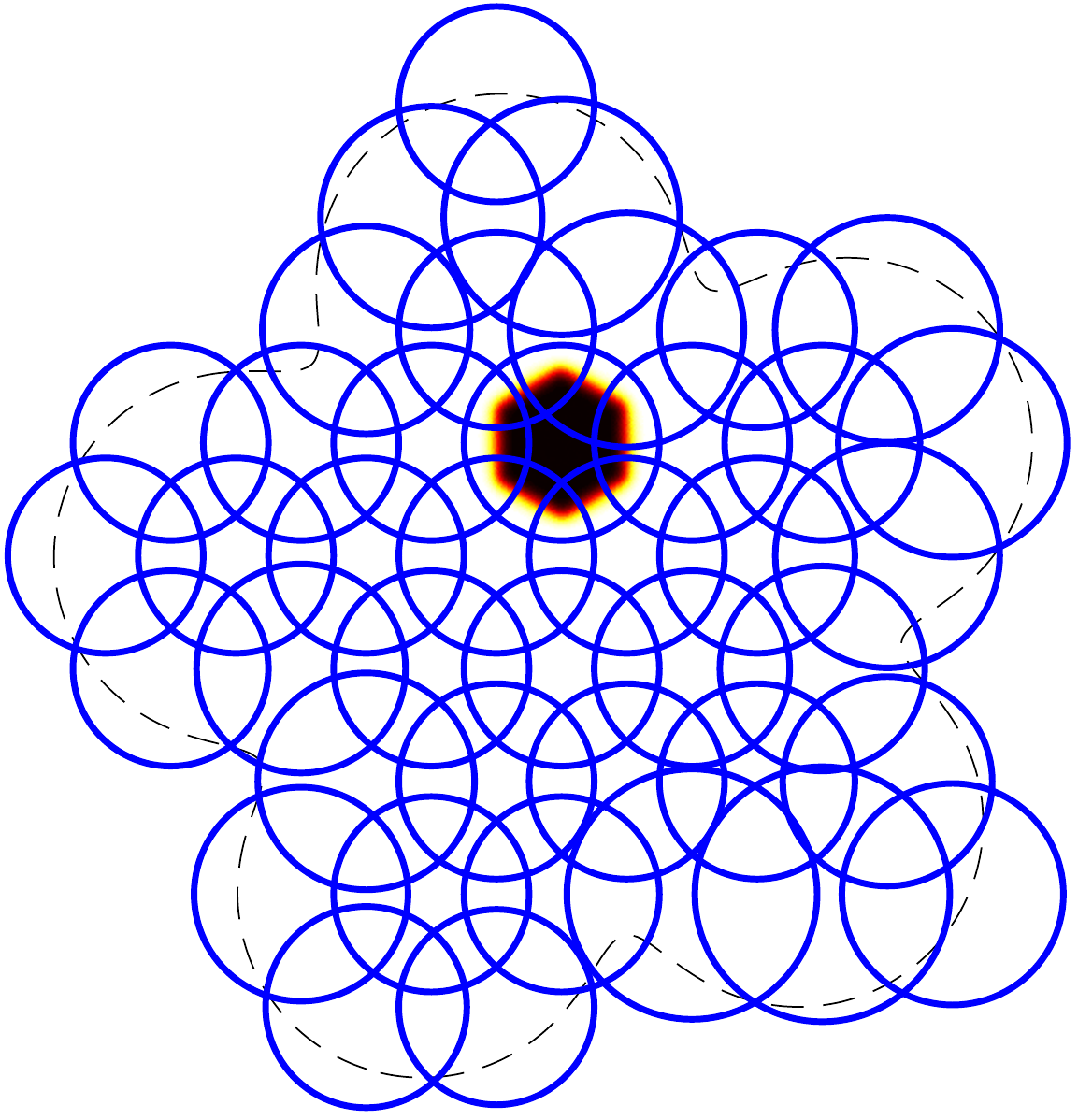} \\
	(a) & (b)
	\end{tabular}
\caption{(a) Illustration of partition of unity patches (outlined in blue lines) for a node set $\nodeset$ (marked with black disks) contained  in a domain $\Omega$ (marked with a dashed line). (b) Illustration of one of the PU weight functions for the patches from part (a), where the color transition from white to yellow to red to black correspond to weight function values from $0$ to $1$.}
\label{fig:RBF_PU}
\end{figure}

\subsection{Description of the method}
A first approach at a vector RBF-PUM may be to construct local vector approximants $\vs_\ell$ for the patches $\Omega_\ell$ that make up the PU using either \eqref{eq:FullDivInterp} for div-free fields or \eqref{eq:FullCurlInterp} for curl-free fields.  These approximants can then be ``blended'' into a global approximant for the underlying field:
\begin{align}
	\vs = \sum_{\ell=1}^M w_\ell \vs_\ell.
	\label{eq:bad_vector_pum}
\end{align}
The issue with this approach is that $\vs$ will not necessarily inherit the div-free or curl-free properties of $\vs_\ell$ because of the multiplication by the weight functions $w_\ell$.  We instead use the local scalar potentials that are recovered from each $\vs_\ell$ and then blend those together.  A div-free or curl-free approximant can then be recovered by applying the appropriate differential operator to the blended potentials.  Since the essential ingredients are very similar for all the kernels treated from Section \ref{sec:review}, for brevity we describe the method only for the div-free case in $\R^2$ and mention any relevant differences as needed.

Let $\nodeset_\ell$ denote the nodes from $\nodeset\subset\R^2$ that belong to patch $\Omega_\ell$, and let $\vs_{\ell}$ denote the div-free RBF interpolant \eqref{eq:FullDivInterp} to the target div-free field $\vu$ over $\nodeset_\ell$.  Our interest is also in the scalar potential for each interpolant given in \eqref{eq:rbf_sf}, which we denote as $\dfpot_{\ell}$.  While we could try to construct a global PU approximant for the scalar potential of the field $\dfpot$ and then apply the operator $\rot$ to the result, we would immediately run into problems since the scalar potentials are only unique up to a constant.  This means that for two patches $\Omega_\ell$ and $\Omega_k$ that overlap, $\dfpot_{\ell}$ and $\dfpot_k$ could be off up to the addition of a constant in the overlap region and thus lead to an inaccurate PU approximant.  To rectify this situation, we need to ``shift'' each $\dfpot_{\ell}$ by a constant $\shift_{\ell}$ such that $\dfpot_{\ell} + \shift_\ell \approx \dfpot_{k} + \shift_k$ if $\Omega_{\ell}$ and $\Omega_k$ overlap.  

To summarize, the main steps of the div-free PUM are as follows:
\begin{enumerate}
\item On each patch $\Omega_\ell$, compute a divergence free interpolant $\vx_{\ell}$ and extract its scalar potential $\dfpot_\ell$ using \eqref{eq:rbf_sf}.
\item Determine constants $\{\shift_\ell\}_{\ell=1}^M$ such that $\tpsi_\ell:=\dfpot_{\ell} + \shift_\ell \approx\dfpot_{k} + \shift_k=:\tpsi_k$ whenever $\Omega_{\ell}\cap\Omega_k \neq \emptyset$.
\item Blend the shifted potentials with the PU weight functions to obtain a global approximant for the underlying potential:
\begin{equation}
\tpsi(\vx):=\sum_{\ell=1}^M w_\ell(\vx)\tpsi_\ell(\vx).
\label{eq:divfree_sf_pum}
\end{equation}
\item Apply $\rot$  to $\tpsi$ to obtain a global div-free approximant to the underlying field:
  \begin{equation}
    \vst(\vx) :=\sum_{\ell=1}^M \rot \left(w_\ell(\vx)\tpsi_\ell(\vx)\right)=\sum_{\ell=1}^M w_\ell(\vx)\vs_\ell(\vx)+\sum_{\ell=1}^M\tpsi_\ell(\vx)\rot(w_\ell(\mathbf{x})).
  \label{eq:divfree_pum}
  \end{equation}
\end{enumerate}
Note that the second term in the last equality acts as a correction to the PU approximant formed by blending just the div-free RBF interpolants.  
\begin{figure}[t]
\centering
	\begin{tabular}{cc}
	\includegraphics[width=0.34\textwidth]{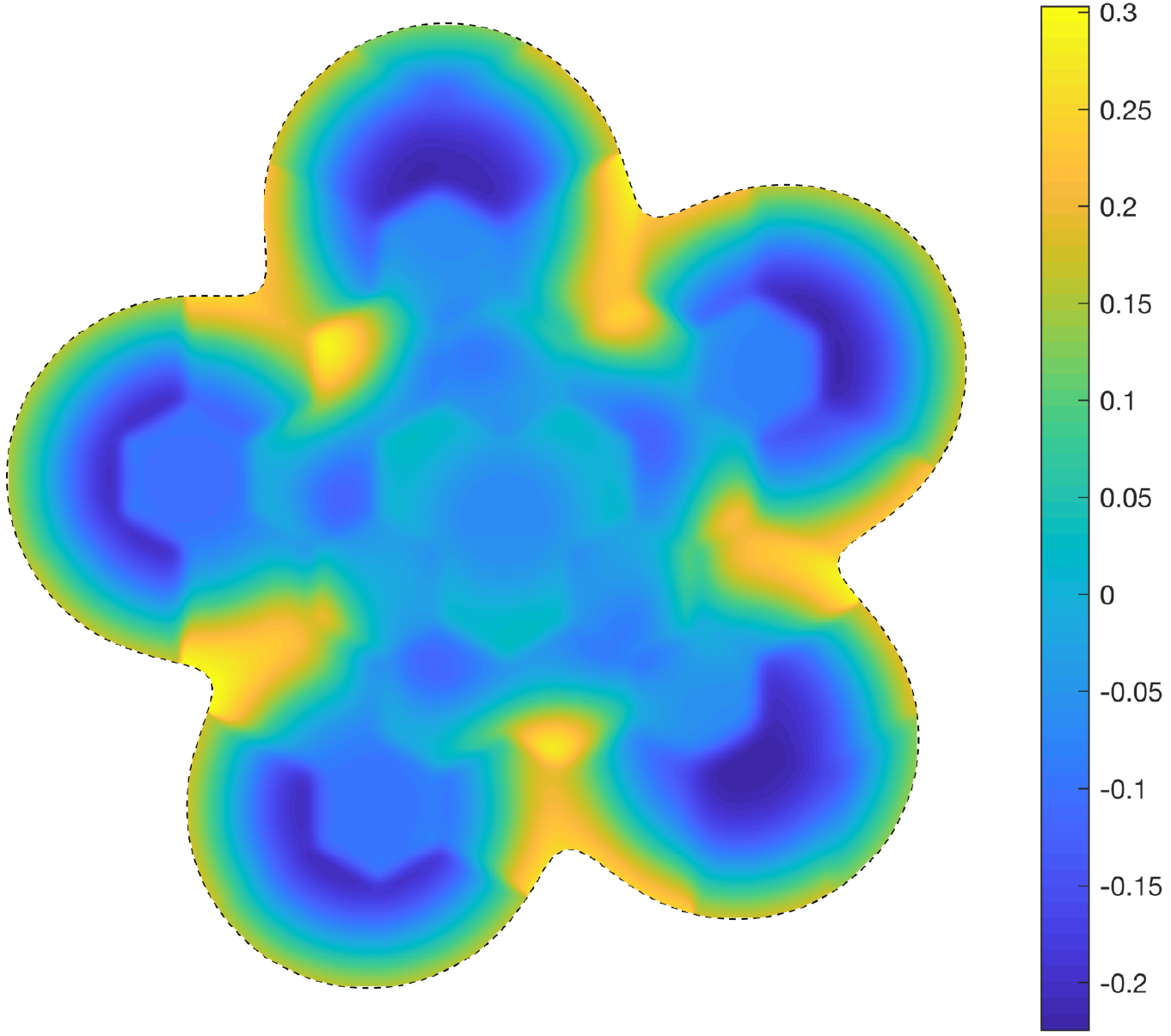} & \includegraphics[width=0.35\textwidth]{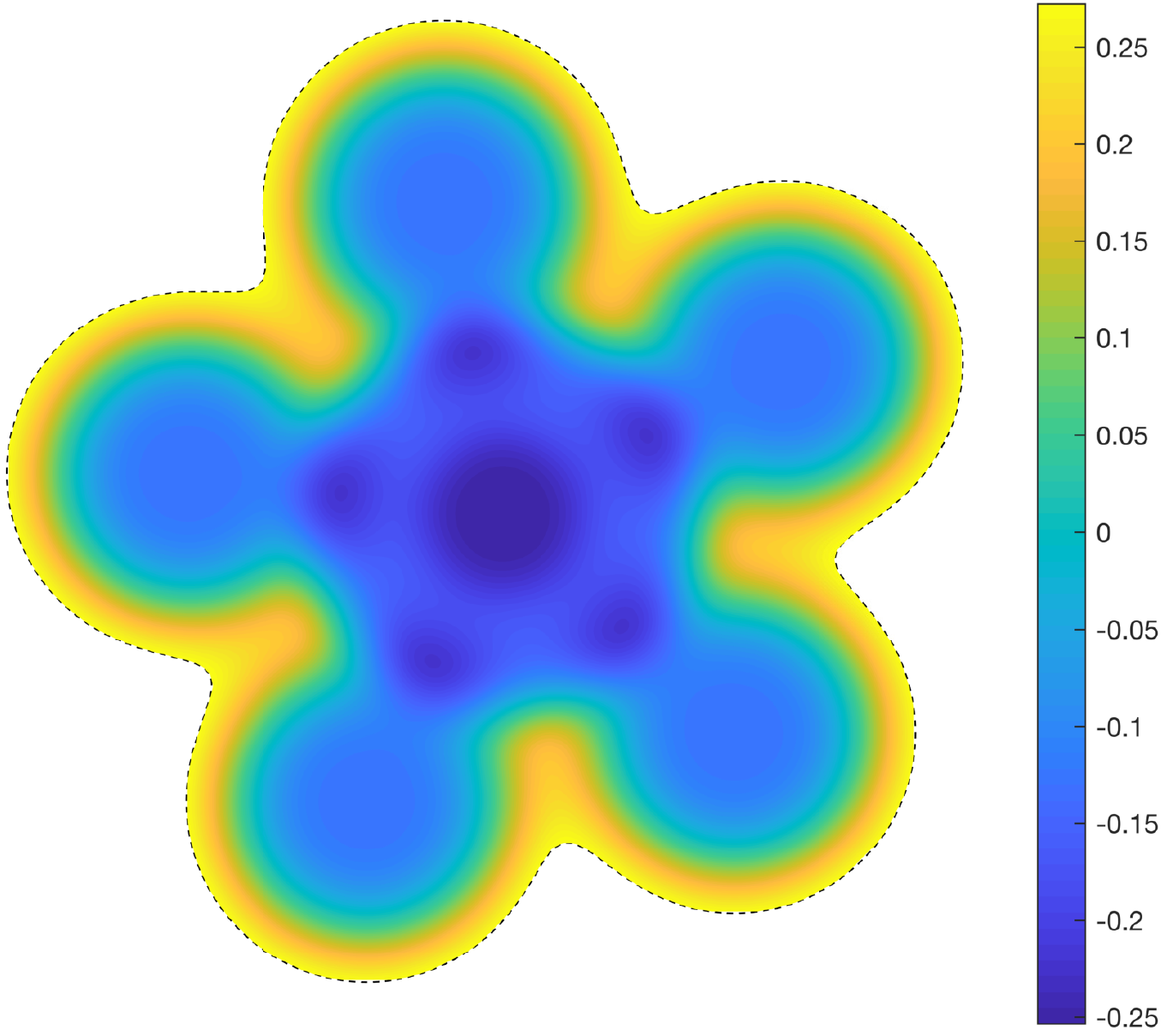} \\
	(a) & (b)
	\end{tabular}
\caption{Div-free RBF partition of unity approximant of the potential from Section \ref{sec:numerics_r2} (a) without the patch potentials shifted ($\dfpot_k$) (b) with the patch potentials shifted ($\tpsi_k$).}
\label{fig:stream_func_pum}
\end{figure}
Figure \ref{fig:stream_func_pum} illustrates the necessity of shifting the patch potentials by way of an example from Section \ref{sec:numerics_r2}.  The figure shows a div-free RBF-PU approximant of a potential when the local patch potentials are not shifted (i.e.,\ using $\dfpot_\ell$ in \eqref{eq:divfree_sf_pum} rather than $\tpsi_\ell$) and when they are shifted.

 \revision{We now turn our attention to a technique for determining the constants $\{\shift_\ell\}_{\ell=1}^M$ for shifting the potential. The idea is to pick a point in the overlap region of each pair of overlapping patches and enforce that the potentials for the each of these patches are equal at this point.  We refer to these points as the ``glue points" since they are where the potentials between neighboring patches are ``glued'' to one another. We have found the following procedure for choosing these points to be effective.  If $\Omega_\ell$ and $\Omega_k$ overlap, then let $\bar{\vx}_{\ell}^k$ denote the center of the overlap region: $\bar{\vx}_{\ell}^k := (\prad_k\vom_\ell + \prad_{\ell}\vom_k)/(\prad_k+\prad_\ell)$, where $\ell < k$ to avoid redundancy; see Figure \ref{fig:Patch_Adjust} for an illustration.}
\begin{figure}[h]
	\centering
	\includegraphics[width=0.5\textwidth]{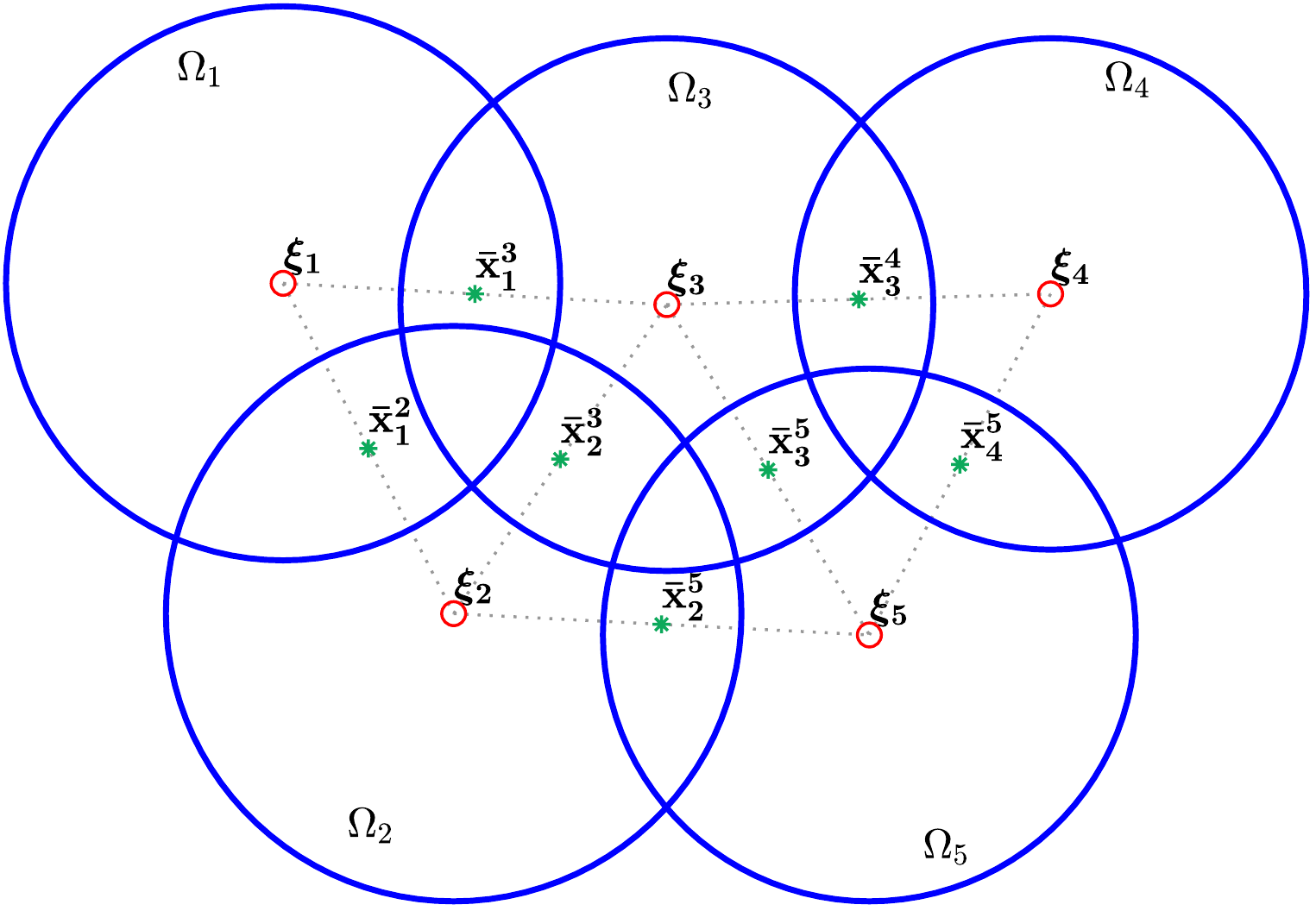}
\caption{Illustration of the glue points for shifting the potentials. The asterisks denote the glue points and the small circles denote the patch centers.}
\label{fig:Patch_Adjust}
\end{figure}
\revision{We denote the collection of all such points by $\bar{X} := \{\bar{\vx}_{\ell}^{k}\,|\, \Omega_\ell\cap \Omega_k\neq \emptyset,\, \ell<k\} = \{\bar{\vx}_i\}_{i=1}^{L}$,
where $L = |\bar{X}|$ and we have reindexed the set so that each $\gluept_i = \gluept_{\ell}^{k}$ for some unique overlapping pair of patches $\Omega_\ell$ and $\Omega_k$.}

On this set we want to impose the conditions
\begin{align*}
  \dfpot_\ell(\bar{\vx}_{\ell}^{k}) + \shift_\ell = \dfpot_k(\bar{\vx}_{\ell}^{k}) + \shift_k
\end{align*}
for some constants $\shift_\ell$, $\ell = 1,\ldots, M$, which we refer to as the ``potential shifts''.
This can be arranged into a sparse $L$-by-$M$ over-determined linear system
\begin{align} 
P\shift = \rhs
\label{eq:adjustmentsystem}
\end{align} 
with the following properties.  The $L$-by-$M$ matrix $P$ is sparse with two non-zeros per row:  the $i^{\text{th}}$ row, where $\bar{\vx}_{i}$ corresponds to $\bar{\vx}_{\ell}^k$, has a $1$ in the $\ell^{\text{th}}$ column and a $-1$ in the $k^{\text{th}}$ column.  The vector $\shift$ contains the potential shifts, and the vector $\rhs$ is given by $\rhs_{i} =  \dfpot_k(\bar{\vx}_{i}) - \dfpot_\ell(\bar{\vx}_{i}) =  \dfpot_k(\bar{\vx}_{\ell}^k) - \dfpot_\ell(\bar{\vx}_{\ell}^k)$.  The matrix $P$ also has rank $M-1$.  This follows since $P$ is the (oriented) incidence matrix for the graph with vertices being the patch centers $\Omega_\ell$ and edges corresponding to non-empty intersections of patches.  Based on the assumption that $\{\Omega_\ell\}_{\ell=1}^M$ is an overlapping open covering, this graph is connected, so $\text{rank}(P)=M-1$~\cite[Thm.\ 10.5]{GraphTheory}. In the next section we discuss the procedure we use to determine the potential shifts from \eqref{eq:adjustmentsystem}.

\begin{remark}
The procedure described above works exactly the same for curl-free fields in $\R^2$ and $\R^3$ using \eqref{eq:CurlInterp_Rd} for the interpolants and potential fields on each patch.  The procedure also extends to more general surfaces $\M$ for div-free fields (using \eqref{eq:rbf_sf}) and curl-free fields (using \eqref{eq:rbf_vp}).  \revision{However, in this case determining the glue points using the above technique can be more difficult, but for $\M=\sphere^2$, this is easy since the center of the overlap region is trivial to determine}.
\end{remark}


\subsection{Implementation details}\label{sec:implementation}
We now discuss how the patches $\{\Omega_{\ell}\}_{\ell=1}^M$ are chosen as well as how one might compute the potential shifts from the system \eqref{eq:adjustmentsystem}.  In what follows, we assume that the nodes $\nodeset$ are quasiuniformly distributed (i.e.,\ have low discrepancy) in the underlying domain $\Omega$, so that the mesh-norm for $X$,
\begin{align}
h := \sup_{\vy\in \Omega} \min_{\vx\in X} \text{dist}(\vx,\vy),
\label{eq:meshnorm}
\end{align}
satisfies $h = \bigO(1/\sqrt[d]{N})$, where $d$ is the dimension of $\Omega$.  We also assume that there is a signed distance function for the domain to distinguish the interior from the exterior.

\subsubsection{Patch centers}
To determine the patches $\{\Omega_{\ell}\}$ for domains in $\R^2$ and $\R^3$, we use an approach similar to the one described in~\cite{Larsson2017}.  The idea is to start with a regular grid structure of spacing $H$ that covers the domain $\Omega$ of interest and then remove the grid points that are not contained in the domain.  The remaining grid points are chosen as the patch centers $\{\vom_{\ell}\}_{\ell=1}^M$.  Next, an initial radius $\rho$ is chosen proportional to $H$ so the patches $\{\Omega_{\ell}\}_{\ell=1}^M$ form an open cover and there is sufficient overlap between patches (specifics on this are given below).  Finally, for any node in $\nodeset$ that is not contained in one of the patches, the nearest patch center $\vom_j$ is determined and the radius $\prad_j$ for that patch is enlarged to enclose the node.   We perform all range queries on patch centers using a $k$-d tree.

For domains in $\R^2$, we choose the initial grid structure for the patch centers as regular hexagonal lattice of spacing $H$.  Neighboring patches will not overlap if the initial radius is less than or equal to $H/2$.  Therefore, to guarantee overlap, we set the initial radii for the patches to $\rho = (1+\delta)H/2$, where $\delta > 0$. See Figure \ref{fig:RBF_PU} for an illustration of the patches chosen using this algorithm for $\delta=1/2$.  For domains in $\R^3$, we choose the initial grid structure for the patch centers as a regular Cartesian lattice of spacing $H$.  In this case, neighboring patches along the longest diagonal directions will not overlap if the initial radius is less than or equal to $\sqrt{3}H/2$.  To guarantee overlap, we thus set the initial radii for the patches to $\rho = (1+\delta)\sqrt{3}H/2$.

To determine the patches for $\sphere^2$, we use an approach similar to the one described in~\cite{shankar_2018}.  The idea is to use $M$ quasi-uniformly spaced points on $\sphere^2$ for the set of patch centers.  We choose these as near minimum energy (ME) point sets~\cite{HardinSaff:2004}, and use the pre-computed near ones from~\cite{SpherePts}.  For a set with $M$ points, the average spacing $H$ between the points can be estimated as $H\approx \sqrt{4\pi/M}$.  We select a value of $H$ and then determine $M$ as $M=\lceil 4\pi/H^2\rceil$.  Since the ME points are typically arranged in hexagonal patterns (with a few exceptions~\cite{HardinSaff:2004}), we choose the radius for each patch as $\rho_{\ell} = (1 + \delta)H/2$, where the parameter $\delta$ again determines the overlap.  

To keep the overall cost under control, the initial radii of the patches $H$ should decrease as $N$ increases.  The rate at which $H$ should decrease can be determined as follows.  Assuming that the patches that intersect the boundary have similar radii to the interior patches, and using the assumption that $\nodeset$ is quasiuniform, a simple volume argument gives that number of nodes in each patch satisfies $n = \bigO(\rho^d N)=\bigO(H^d N)$, where $d$ is the dimension of $\Omega$.  So, to keep the work roughly constant per patch, we need $H=\bigO(1/N^{1/d})$.  In our implementation of the vector PUM, we choose
\begin{align}
H = q \left(A/N\right)^{1/d},
\label{eq:H_heuristic}
\end{align}
where $A$ is related to the area/volume of $\Omega$, and $q$ is a parameter that controls the average number of nodes per patch.  Note that from the above analysis, the computational cost increases as the overlap parameter increases and as $q$ increases.  Based on the assumptions on $X$ and the patches, choosing $H$ according to \eqref{eq:H_heuristic} results in a computational cost of $\bigO(N)$ for constructing the vector PUM approximants, and $\bigO(N\log N)$ for the range queries involved for determining the patch structure.  However, in practice, the cost is dominated by the former part of the method.

\subsubsection{Potential shifts}
Since $\text{rank}(P)=M-1$ and its nullspace consists of constant vectors, we first set one of the shifts $\shift_j$ to zero, for some $1\leq j\leq M$, and then compute the remaining shifts using the least squares solution of \eqref{eq:adjustmentsystem}.  For this problem we can form the normal equations directly since the matrix $P^TP$ is just a graph Laplacian (recall $P$ is an oriented incidence matrix).  We have found that the accuracy of the reconstructed field \eqref{eq:divfree_pum} can often be improved if a weighted least squares approach is used.  In this case, we use a diagonal weight matrix $W$ with entries that depend on the distance between the glue points and the patch centers.  Specifically, we set $r_i$ as the closer of the two distances between the $i^{\rm th}$ glue point $\bar{\vx}_i$ and the centers of the two patches it was formed from, and then set 
\begin{align}
W_{ii} = \exp\left(-\gamma\left(1-\frac{r_i}{r_{\rm min}}\right)^2\right),
\label{eq:wght_matrix}
\end{align}
where $r_{\rm min} = \min_{j} r_j$ and $\gamma > 0$.  The normal equations in this case now look like a weighted graph Laplacian.

\section{Error Estimates}\label{mainerrorsection}
The error bounds will be expressed in terms of local mesh norms $h_\ell$, 
which are given by \eqref{eq:meshnorm}, with $\Omega=\Omega_{\ell}$ and $X=X_\ell$.
Error rates for RBF interpolation, including divergence-free (curl-free) RBF approximation, both in flat space and on the sphere, have been known for some time. Many of these estimates are valid for target functions within the \emph{native space}, which we denote by $\mathcal{N}(\Omega)$, of the RBF used - which for infinitely smooth RBFs are subspaces of analytic functions and for kernels of finite smoothness are essentially Sobolev spaces (with norms equivalent to Sobolev norms on bounded subsets)\footnote{See \cite[Ch. 10]{WendlandScatteredData} for native spaces of scalar valued functions, and see \cite{Fuselier2008:StabilityNative,FuselierEtAl2009:SphereDivFree} for the vector cases on $\mathbb{R}^d$ and the sphere.}. For the RBF kernels considered here, there is a continuous embedding from the native space of the matrix kernel into a Sobolev space of order $\tau > d/2$. In this situation we get the estimate below. \revision{In what follows, we let $\vH^\tau(\Omega_\ell)$ denote the space of vector fields with each coordinate function in the Sobolev space $H^\tau(\Omega)$ with smoothness $\tau$.

\begin{proposition}\label{prop:localerror}
Suppose that $\vu\in \mathcal{N}(\patch)$ and that each $\patch_\ell\subset \patch$ satisfies an interior cone condition with radius $R_\ell$ and angle $\theta$ independent of $\ell$. Suppose also that there is a continous embedding of the native space into $\vH^\tau(\Omega)$. Then there are constants $Q:=Q(\theta,\tau)$ and $C:=C(\theta,\tau,d)$ such that if $h_\ell<Q R_\ell$, then
\[\|\vu - \vs_{\ell}\|_{L_\infty(\patch_\ell)} \leq \mathcal{E}(h_\ell)\|\vu\|_{\mathcal{N}(\patch_\ell)},\]
where $\mathcal{E}(h) = Ch^{\tau - d/2}$. 
\end{proposition}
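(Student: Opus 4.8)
The plan is to reduce this vector-valued estimate to a standard scalar ``zeros lemma'' (sampling inequality) applied componentwise, combined with the reproducing/minimization property of the native space. First I would record the key structural fact: because $\vs_\ell$ is the div-free (resp.\ curl-free) RBF interpolant of $\vu$ on the nodes $\nodeset_\ell\subset\patch_\ell$, and because both $\vu$ and $\vs_\ell$ are tangent to $\M$, matching the two tangential interpolation conditions at each node forces the full vectors to agree there. Hence the error field $\vg:=\vu-\vs_\ell$ vanishes at every node of $\nodeset_\ell$.

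Next I would exploit the native space geometry. Since $\Phidiv$ (resp.\ $\Phicurl$) is the reproducing kernel of $\mathcal{N}(\patch_\ell)$, the interpolant $\vs_\ell$ is the orthogonal projection of $\vu$ onto the span of the kernel translates, so the Pythagorean identity gives $\|\vg\|_{\mathcal{N}(\patch_\ell)}\le \|\vu\|_{\mathcal{N}(\patch_\ell)}$. The hypothesized continuous embedding $\mathcal{N}(\patch_\ell)\hookrightarrow\vH^\tau(\patch_\ell)$ then yields $\|\vg\|_{\vH^\tau(\patch_\ell)}\le C_1\|\vu\|_{\mathcal{N}(\patch_\ell)}$, where $C_1=C_1(\tau)$ is the embedding constant. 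The interior cone condition with angle $\theta$ independent of $\ell$ is what allows this constant — and the ones below — to be taken uniformly in $\ell$.

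The heart of the argument is the scalar sampling inequality: for a domain satisfying an interior cone condition with radius $R_\ell$ and angle $\theta$, and for $\tau>d/2$, there exist constants $Q=Q(\theta,\tau)$ and $C_2=C_2(\theta,\tau,d)$ such that any $g\in H^\tau(\patch_\ell)$ vanishing on a node set of mesh norm $h_\ell<Q R_\ell$ obeys $\|g\|_{L_\infty(\patch_\ell)}\le C_2\,h_\ell^{\tau-d/2}\|g\|_{H^\tau(\patch_\ell)}$ (see, e.g., \cite{WendlandScatteredData}). Applying this to each Cartesian component of $\vg$ and summing yields $\|\vg\|_{L_\infty(\patch_\ell)}\le C_3\,h_\ell^{\tau-d/2}\|\vg\|_{\vH^\tau(\patch_\ell)}$ with $C_3=C_3(\theta,\tau,d)$. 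Chaining the three estimates gives
\[\|\vu-\vs_\ell\|_{L_\infty(\patch_\ell)}\le C_3\,h_\ell^{\tau-d/2}\|\vg\|_{\vH^\tau(\patch_\ell)}\le C_1 C_3\, h_\ell^{\tau-d/2}\|\vu\|_{\mathcal{N}(\patch_\ell)},\]
and setting $C:=C_1 C_3$ and $\mathcal{E}(h)=Ch^{\tau-d/2}$ completes the proof.

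The main obstacle I anticipate is twofold. The genuinely technical ingredient is the scalar zeros lemma, whose proof requires $\tau>d/2$ (so that $H^\tau$ embeds into continuous functions) together with a local polynomial-reproduction argument controlled by the cone geometry; this is where the threshold $h_\ell<Q R_\ell$ and the exponent $\tau-d/2$ originate. The more delicate bookkeeping is ensuring that all constants are independent of $\ell$: this rests on the uniform cone angle $\theta$ and on the native-space-to-Sobolev embedding constant being uniform over the patches, so that $Q$ and $C$ depend only on $\theta$, $\tau$, and $d$ as claimed.
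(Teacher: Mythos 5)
Your overall strategy matches the paper's: observe that $\vg:=\vu-\vs_\ell$ vanishes on $\nodeset_\ell$, apply a scalar zeros lemma componentwise (this is where the cone condition, the threshold $h_\ell<QR_\ell$, and the exponent $\tau-d/2$ enter), and control the resulting Sobolev norm by $\|\vu\|_{\mathcal{N}(\patch_\ell)}$ via orthogonality of the interpolation error in the native space. You assemble the chain in the opposite order from the paper, which is immaterial.

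There is, however, one genuine gap: the step $\|\vg\|_{\vH^\tau(\patch_\ell)}\le C_1\|\vg\|_{\mathcal{N}(\patch_\ell)}$ with $C_1$ independent of $\ell$. The hypothesis only provides a continuous embedding of the native space of the \emph{global} domain, $\mathcal{N}(\Omega)\hookrightarrow\vH^\tau(\Omega)$; it says nothing directly about the local spaces $\mathcal{N}(\patch_\ell)$. Your justification---that the uniform cone angle $\theta$ makes the local embedding constants uniform---is not correct: the cone condition enters only through the zeros lemma, and it cannot produce embedding constants for $\mathcal{N}(\patch_\ell)\hookrightarrow\vH^\tau(\patch_\ell)$. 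The paper bridges exactly this gap with the native-space extension theorem (\cite[Thms.~10.46, 10.47]{WendlandScatteredData}, valid for matrix-valued positive definite kernels on arbitrary sets): there is an isometric extension $E:\mathcal{N}(\patch_\ell)\to\mathcal{N}(\Omega)$, and the interpolant of $E\vu$ on $\nodeset_\ell$ is again $\vs_\ell$. One then estimates $\|\vg\|_{\vH^\tau(\patch_\ell)}\le\|E\vu-\vs_\ell\|_{\vH^\tau(\Omega)}\le C\|E\vu-\vs_\ell\|_{\mathcal{N}(\Omega)}\le C\|E\vu\|_{\mathcal{N}(\Omega)}=C\|\vu\|_{\mathcal{N}(\patch_\ell)}$, using restriction of the Sobolev norm, the global embedding (whose constant depends only on $\Omega$), orthogonality in the global native space, and the isometry of $E$. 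This argument shows that your claimed local inequality is in fact true with a constant uniform in $\ell$, but the isometric extension---not the cone condition---is the missing ingredient; as written, your constant $C_1$ is unjustified.
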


\begin{proof}
Estimates like these have been worked out for div/curl-free RBFs on subsets of $\R^d$ and on $\sphere^2$~\cite{Fuselier2008:VectorError,FuselierEtAl2009:SphereDivFree, FuselierWright2009:SphereDecomp}. However, in the papers referenced the domain was fixed and the dependence of the constants on the cone condition radius was not emphasized, so we should briefly review the arguments here.

First, note that the function $\vu - \vs_{\ell}$ will be zero on $\nodeset_\ell$. On domains satifying an interior cone condition, in the Euclidean case and on surfaces, we may therefore employ a ``zeros lemma'' in each coordinate function. These give constants $Q:=Q(\theta,\tau)$ and $C:=C(\theta,\tau,d)$ such that if $h_\ell<Q R_\ell$, then
\[\|\vu - \vs_{\ell}\|_{L_{\infty}(\Omega_\ell)} \leq C h_{\ell}^{\tau - d/2}\|\vu - \vs_{\ell}\|_{\vH^\tau(\Omega_\ell)}.\]
See for example \cite[Theorem 11.32]{WendlandScatteredData} and\cite[Theorems A.4 and A.11]{HangelbroekEtAl:2012PolyharmI}). 

Next, since $\vu\in \mathcal{N}(\Omega)$, then $\vu\in \mathcal{N}(\Omega_\ell)$ and there is an isometric extension $E:\mathcal{N}(\Omega_\ell) \rightarrow \mathcal{N}(\Omega)$ such that $\|E\vu\|_{\mathcal{N}(\Omega)} = \|\vu\|_{\mathcal{N}(\Omega_\ell)}$ (see \cite[Theorem 10.46,10.47]{WendlandScatteredData}\footnote{The theorems referenced are given in the Euclidean scalar-valued context, but the arguments are general enouch to apply to matrix valued positive definite kernels on any set.}). With this, since $\mathcal{N}(\Omega)$ is \revision{continuously} embedded in $\vH^\tau(\Omega)$ for some $\tau>d/2$, we get
\begin{eqnarray*}
\|\vu - \vs_{\ell}\|_{\vH^\tau(\Omega_\ell)}&=& \|E\vu - \vs_{E\vu,\ell}\|_{\vH^\tau(\Omega_\ell)} \leq \|E\vu - \vs_{E\vu,\ell}\|_{\vH^\tau(\Omega)} \leq C \|E\vu - \vs_{E\vu,\ell}\|_{\mathcal{N}(\Omega)},
\end{eqnarray*}
where we write $\vs_{E\vu,\ell}= \vs_\ell$ to emphasize that the interpolant on $\nodeset_\ell$ of the extension is also $\vs_\ell$. Note that the constant here may depend on $\Omega$, but not on $\Omega_\ell$. Finally, it is well-known that the interpolation error is always orthogonal to the kernel interpolant in the native space, which implies the bound
\[\|E\vu - \vs_{E\vu,\ell}\|_{\mathcal{N}(\Omega)} \leq \|E\vu\|_{\mathcal{N}(\Omega)} = \|\vu\|_{\mathcal{N}(\Omega_\ell)},\]
where the last equality follows because $E$ is an isometry. This completes the proof.
\end{proof}
Thus it is possible to acheive high order convergence with patch sizes that are proportional to the mesh norm. In what follows we assume that the patch radii and local mesh norms are such that Proposition \ref{prop:localerror} is satisfied.

}


In addition to the estimate above, our arguments that follow will also rely on the Mean Value Theorem, which for a scalar function $\dfpot\revision{:\R^d\rightarrow \R}$ and $\vx,\vy\in \R^d$ we express as
\begin{equation*}
  |\dfpot(\vx) - \dfpot(\vy)| \leq \left|\nabla(\dfpot)|_{\vx^*}\right|\,\dist(\vx,\vy),
\end{equation*}
where $\vx^*$ is on the line segment between $\vx$ and $\vy$. Here we use the notation $|\cdot|$ to denote the Euclidean length when the argument is a vector. To derive a similar estimate on surfaces, let $\vx,\vy\in \M$ and let $\gamma:[0,\sdist(\vx,\vy)]\rightarrow \M$ denote a shortest path in $\M$ connecting $\vx$ and $\vy$ with $\gamma(0)=\vx$, $\gamma(\sdist(\vx,\vy))=\vy$, parameterized by arclength. This implies that $\gamma'$ is tangent to $\M$ and $|\gamma'|=1$. Applying the single variable Mean Value Theorem to the real-valued function $\psi\circ\gamma$ implies that
\[
|\dfpot(\vx) - \dfpot(\vy)| \leq |\nabla\psi\cdot\gamma'|_{t^*}|\sdist(\vx,\vy),\]
where $t^*\in [0,\sdist(\vx,\vy)]$. Since $\gamma'$ is tangent to $\M$ and has length $1$, we get $|\nabla\psi\cdot\gamma'| = |\sgrad\dfpot\cdot\gamma'|\leq |\sgrad\dfpot|$. Combining the above with the fact that $|\surfacegrad(\dfpot)| = |\surfacecurl(\dfpot)|$ gives us the following
\begin{equation}\label{MVTsurface}
|\dfpot(\vx) - \dfpot(\vy)|\leq  \left|\surfacegrad(\dfpot)|_{\vx^*}\right|\,\sdist(x,y) = \left|\surfacecurl(\dfpot)|_{\vx^*}\right|\,\sdist(\vx,\vy),
\end{equation}
where $\vx^*\in \M$. 


Before proceeding we summarize some of the important assumptions on the partition of unity. Recall that each $\vx\in\Omega$ is covered by only a small number of patches (say at most $K$ patches). We also assume that the number of patches that intersect a given patch is uniformly bounded by some constant $m$. Additionally, we suppose that there are roughly the same number of nodes in each patch, and that the node distribution in each patch is quasi-uniform. This leads to an estimate of the form $c h_\ell \leq \text{diam}(\patch_\ell)\leq C h_\ell$ for some constants $c,C$ independent of $\ell$. Lastly, we assume that the partition is ``1-stable'' (see \cite{WendlandScatteredData}[Def. 15.16]), meaning that first order derivatives of the weight functions satisfy a bound of the form $|\nabla w_\ell|\leq C(\text{diam}(\patch_\ell))^{-1}$, where $C$ is some constant independent of $\ell$. This with the quasi-uniformity supposition gives the bound $|\nabla{w}_\ell|=|\surfacecurl w_\ell|\leq Ch_\ell^{-1}$ for some $C$ independent of $\ell$. 

Now we give an estimate for the pointwise error of the divergence-free approximant in a two dimensional domain. Note that the bound is local in the sense that it comprised of a local interpolation error plus an expression involving the residuals $r_\ell^k:=\localadjdfpot_{\ell}(\gluept_\ell^k) - \localadjdfpot_k(\gluept_\ell^k)$ from adjusting neighboring potential functions. 
\begin{theorem}\label{theorem:errorestimates}
\revision{Suppose that the conditions in Proposition 4.1 are satisfied.} Given a div-free vector field $\vu = \surfacecurl(\dfpot)\in \mathcal{N}(\Omega)$, let $\blendedpotential$ and $\dfpuapprox = \surfacecurl(\blendedpotential)$ denote the PUM approximants from \eqref{eq:divfree_sf_pum} and \eqref{eq:divfree_pum}. Then the error at $\vx \in \Omega$ satisfies 
\begin{eqnarray}
  \left|\surfacegrad(\blendedpotential - \dfpot)(\vx)\right| &=& \left|\surfacecurl(\blendedpotential - \dfpot)(\vx)\right| = \left|\vu(\vx) - \dfpuapprox(\vx)\right|\nonumber\\
  &\leq &m C\max_{\ell\,|\,\vx\in \patch_\ell} \left(\mathcal{E}(h_\ell)\|\vu\|_{\mathcal{N}(\patch_\ell)}\right) + C\sum_{\ell | \vx\in \patch_\ell,\, \ell\neq k}h_\ell^{-1}|r_\ell^k|,\label{eq:errorestimates}
\end{eqnarray}
where $k$ is any index such that $\vx\in \patch_k$.
\end{theorem}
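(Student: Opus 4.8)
The plan is to establish the two equalities first and then bound the common scalar quantity $|\surfacecurl(\blendedpotential - \dfpot)(\vx)|$. The identity $|\surfacegrad(f)| = |\surfacecurl(f)|$ for a scalar $f$ is exactly the one established in the derivation of \eqref{MVTsurface}, since $\surfacecurl$ is $\surfacegrad$ rotated a quarter turn within the tangent plane; and the equality $|\surfacecurl(\blendedpotential-\dfpot)| = |\vu - \dfpuapprox|$ is immediate from $\vu = \surfacecurl(\dfpot)$, $\dfpuapprox = \surfacecurl(\blendedpotential)$, and linearity. Everything then reduces to a pointwise estimate of $\surfacecurl(\blendedpotential - \dfpot)$. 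Using $\sum_\ell w_\ell\equiv 1$ I would write $\blendedpotential - \dfpot = \sum_\ell w_\ell(\tpsi_\ell - \dfpot)$ and apply the product rule for the first-order operator $\surfacecurl$:
\[\surfacecurl(\blendedpotential - \dfpot) = \sum_\ell w_\ell\,\surfacecurl(\tpsi_\ell - \dfpot) + \sum_\ell (\tpsi_\ell - \dfpot)\,\surfacecurl(w_\ell).\]
Since $\surfacecurl$ annihilates the constant shift, $\surfacecurl(\tpsi_\ell) = \vs_\ell$, so the first sum is $\sum_\ell w_\ell(\vs_\ell - \vu)$, bounded by Proposition \ref{prop:localerror}: each summand is at most $\mathcal{E}(h_\ell)\|\vu\|_{\mathcal{N}(\patch_\ell)}$, and the nonnegative weights sum to one over the (at most $m$) patches containing $\vx$, giving the first term of \eqref{eq:errorestimates}.

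The decisive step is the second sum, which I would handle by fixing a reference index $k$ with $\vx\in\patch_k$ and exploiting the global identity $\sum_\ell \surfacecurl(w_\ell) = \surfacecurl\bigl(\sum_\ell w_\ell\bigr) = \vzero$. This lets me subtract the constant $(\tpsi_k - \dfpot)(\vx)$ for free, which cancels the unknown global potential $\dfpot$ entirely:
\[\sum_\ell (\tpsi_\ell - \dfpot)(\vx)\,\surfacecurl(w_\ell)(\vx) = \sum_{\ell\,|\,\vx\in\patch_\ell,\,\ell\neq k} \bigl(\tpsi_\ell(\vx) - \tpsi_k(\vx)\bigr)\,\surfacecurl(w_\ell)(\vx).\]
It then remains to control $|\tpsi_\ell(\vx) - \tpsi_k(\vx)|$, which I route through the glue point $\gluept_\ell^k$ via the three-term splitting into $\bigl(\tpsi_\ell(\vx) - \tpsi_\ell(\gluept_\ell^k)\bigr)$, the residual $r_\ell^k = \tpsi_\ell(\gluept_\ell^k) - \tpsi_k(\gluept_\ell^k)$, and $\bigl(\tpsi_k(\gluept_\ell^k) - \tpsi_k(\vx)\bigr)$.

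For the first and third differences I apply the surface Mean Value Theorem \eqref{MVTsurface} to $\tpsi_\ell - \dfpot$ (resp.\ $\tpsi_k - \dfpot$), whose surface curl is precisely the interpolation error $\vs_\ell - \vu$; this produces a factor $\sdist(\vx,\gluept_\ell^k)\le \text{diam}(\patch_\ell)\le Ch_\ell$ times $\|\vs_\ell - \vu\|_{L_\infty(\patch_\ell)}\le \mathcal{E}(h_\ell)\|\vu\|_{\mathcal{N}(\patch_\ell)}$. Multiplying by the $1$-stability bound $|\surfacecurl(w_\ell)(\vx)|\le Ch_\ell^{-1}$ makes the $h_\ell$ cancel, so these pieces fold into the $\mathcal{E}$ term, while the residual pieces produce exactly $C\sum_\ell h_\ell^{-1}|r_\ell^k|$, the second term of \eqref{eq:errorestimates}.

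The care points I expect to be the main obstacle are threefold. First, the sign-free cancellation of $\dfpot$ hinges on the global identity $\sum_\ell \surfacecurl(w_\ell)=\vzero$, which is why one must subtract the value at the single point $\vx$ rather than attempt to compare potentials globally. Second, invoking \eqref{MVTsurface} requires that the minimizing geodesic from $\vx$ to $\gluept_\ell^k$ remain inside $\patch_\ell$, so that the $L_\infty(\patch_\ell)$ interpolation bound applies along it; this is where I rely on the patches being geodesically convex (disks, spherical caps) together with $\vx,\gluept_\ell^k\in\patch_\ell$. Third, the reference-patch differences introduce an $h_k$ that must be reconciled with the stated $h_\ell^{-1}$ scaling; this is resolved by the quasi-uniformity hypothesis $h_k \asymp h_\ell$ for overlapping patches, and the bound of $m$ on the number of patches meeting $\patch_k$ supplies the leading factor $mC$ after collecting the $\mathcal{E}$ contributions from both sums.
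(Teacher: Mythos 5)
Your proposal is correct and follows essentially the same route as the paper's proof: the same product-rule decomposition of $\dfpuapprox$ into $\sum_\ell w_\ell \vs_\ell + \sum_\ell \tpsi_\ell\,\surfacecurl(w_\ell)$, the same use of $\sum_\ell \surfacecurl(w_\ell)=\vzero$ to subtract the reference value on a fixed patch $\patch_k$, and the same glue-point splitting controlled by the surface Mean Value Theorem \eqref{MVTsurface} together with the $1$-stability bound $|\surfacecurl(w_\ell)|\leq Ch_\ell^{-1}$. The only cosmetic difference is that the paper applies the MVT once to $g=\tpsi_\ell-\tpsi_k$ (whose curl is $\vs_\ell-\vs_k$) and then inserts $\vu$ by a triangle inequality, whereas you subtract $\dfpot$ from each local potential and apply the MVT twice; these coincide once the $\dfpot$ increments in your first and third differences are combined so that they cancel.
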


\begin{proof}
The first equality follows from the fact that $\surfacegrad f$ and $\surfacecurl f$ have the same magnitude. Next, note that
\begin{equation}\label{interpolantandcorrection}
\dfpuapprox = \sum_\ell w_\ell \vs_{\ell} + \sum_\ell\surfacecurl(w_\ell)\tpsi_\ell.
\end{equation}
The first term is a weighted average of RBF interpolants to $\vu$ and the weight functions sum to 1, so we have
\begin{eqnarray*}
\left|\vu(\vx) - \sum_\ell w_\ell(\vx) \vs_{\ell}(\vx)\right| & = & \left|\sum_\ell w_\ell(\vx)\vu(\vx) - \sum_\ell w_\ell(\vx)\vs_{\ell}(\vx)\right|\leq \sum_\ell w_\ell(\vx)|\vu(\vx) - \vs_{\ell}(\vx)|\\
&\leq &\sum_\ell w_\ell(\vx)C\mathcal{E}(h_\ell)\|\vu\|_{\mathcal{N}(\patch_\ell)}= C\max_{\ell\,|\,\vx\in \patch_\ell} \mathcal{E}(h_\ell)\|\vu\|_{\mathcal{N}(\patch_\ell)}. 
\end{eqnarray*}

To complete the proof we need to bound the second term in \eqref{interpolantandcorrection}. Given $\vx\in \Omega$, fix a $k$ such that $\vx\in \patch_k$. Since $\sum \surfacecurl(w_\ell) = 0$ and $w_\ell(\vx) = 0$ for $\vx\notin \patch_\ell$ we get
\[  \sum_\ell\surfacecurl(w_\ell)\tpsi_\ell(\vx) =  \sum_{\ell\,|\,\vx\in \patch_\ell} \surfacecurl(w_\ell)\left(\tpsi_\ell(\vx) - \tpsi_k(\vx)\right).\]
This and our assumptions on the weight functions give us the estimate
\begin{equation}\label{correctionbound}
  \left|\sum_\ell\surfacecurl(w_\ell)\tpsi_\ell(\vx)\right| \leq \sum_{\ell\,|\,\vx\in \patch_\ell} C h_\ell^{-1}\left |\tpsi_\ell(\vx) - \tpsi_k(\vx)\right|.
\end{equation}
If $\ell = k$, the corresponding term in the sum is zero. If $\ell\neq k$, we let $g:=\localadjdfpot_\ell - \localadjdfpot_k$ and $\gluept_\ell^k$ be the adjustment point for $\patch_\ell$ and $\patch_k$, we can rewrite
\begin{equation*}
\tpsi_\ell(\vx) - \tpsi_k(\vx) = g(\vx)- g(\gluept_\ell^k) + g(\gluept_\ell^k) = g(\vx)- g(\gluept_\ell^k) + r_\ell^k.
\end{equation*}
To bound $g(\vx)- g(\gluept_\ell^k)$, we use \eqref{MVTsurface} and the fact that $\surfacecurl(g) = \vs_{\ell} - \vs_{k}$ to get
\begin{eqnarray*}
|g(\vx)- g(\gluept_\ell^k)| &\leq& \|\surfacecurl(g)\|_{L_\infty(\patch_k\cap \patch_\ell)}\text{dist}(\vx,\gluept_\ell^k) \leq \|\surfacecurl(g)\|_{L_\infty(\patch_k\cap \patch_\ell)}h_{\ell} \nonumber \\
  & \leq & h_{\ell} \left(\| \vs_{\ell} - \vu \|_{L_\infty(\patch_k\cap \patch_\ell)} + \| \vu - \vs_{k}\|_{L_\infty(\patch_k\cap \patch_\ell)})\right)\nonumber \\
  & \leq & C h_{\ell} \left(\mathcal{E}(h_\ell)\|\vu\|_{\mathcal{N}(\patch_\ell)} + \mathcal{E}(h_k)\|\vu\|_{\mathcal{N}(\patch_k)}\right), \nonumber 
\end{eqnarray*}
which when applied to \eqref{correctionbound} gives 
\begin{eqnarray*}
  \left|\sum_\ell\surfacecurl(w_\ell)\tpsi_\ell(\vx)\right| &\leq& \sum_{\ell | \vx\in \patch_\ell,\,\ell \neq k} C\left(\mathcal{E}(h_\ell)\|\vu\|_{\mathcal{N}(\patch_\ell)} + \mathcal{E}(h_k)\|\vu\|_{\mathcal{N}(\patch_k)}\right) + C h_\ell^{-1}|r_\ell^k|\\
& \leq & m C\max_{\ell\,|\,\vx\in \patch_\ell} \mathcal{E}(h_\ell)\|\vu\|_{\mathcal{N}(\patch_\ell)} + C\sum_{\ell | x\in \patch_\ell,\, \ell\neq k}h_\ell^{-1}|r_\ell^k|.
\end{eqnarray*}
The result follows.
\end{proof}

Note that very similar arguments follow through also for curl-free vector fields on surfaces, i.e. an estimate identical to \eqref{eq:errorestimates} holds for the curl-free case. The proof also carries directly over to $\R^d$ - namely if $\vu = \nabla \cfpot$, and $\dfpuapprox = \nabla\widetilde{\cfpot}$ denotes the curl-free RBF-PUM approximant, one has an estimate of the form
\begin{equation*}
\left|\nabla(\widetilde{\cfpot} - \cfpot)(\vx)\right| = \left|\vu(\vx) - \dfpuapprox(\vx)\right| \leq m C\max_{\ell\,|\,\vx\in \patch_\ell} \left(\mathcal{E}(h_\ell)\|\vu\|_{\mathcal{N}(\patch_\ell)}\right) + C\sum_{\ell | \vx\in \patch_\ell,\, \ell\neq k}h_\ell^{-1}|r_\ell^k|.
\end{equation*}

Now we discuss the residual in shifting the local potentials. We begin by showing that good constants for the shifts exist.
\begin{proposition}\label{prop:potentialshifts}
Let $\vs_\ell = \rot \dfpot_{\ell}$ be the local RBF interpolant on $\nodeset_\ell\subset \Omega_\ell$ and let $\glueptset_\ell = \glueptset\cap \patch_\ell$ be the collection of glue points on $\patch_\ell$. Given any $v$ such that $\vu = \surfacecurl(v)$, the constant
\[\shift^*_\ell := \displaystyle \frac{1}{|\glueptset_\ell|}\sum_{\vy\in \glueptset_\ell}v(\vy) - \dfpot_\ell(\vy)\]
gives
\[\|\dfpot_\ell + \shift^*_\ell - v\|_{L_\infty(\patch_\ell)} \leq C h_\ell\mathcal{E}(h_\ell) \|\vu\|_{\mathcal{N}(\patch_\ell)}.\]
\end{proposition}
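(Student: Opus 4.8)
The plan is to recognize that the prescribed shift $\shift^*_\ell$ is exactly the mean of the error $v-\dfpot_\ell$ over the glue points, so that the quantity to be bounded becomes the oscillation of this error about its average; that oscillation can then be controlled by the surface Mean Value Theorem \eqref{MVTsurface} once we know the error has small surface-curl. The local interpolation estimate of Proposition \ref{prop:localerror} supplies precisely that curl bound.

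First I would set $f := \dfpot_\ell - v$ on $\patch_\ell$ and record the key structural fact $\surfacecurl(f) = \surfacecurl(\dfpot_\ell) - \surfacecurl(v) = \vs_\ell - \vu$. Proposition \ref{prop:localerror} then gives the uniform bound $\|\surfacecurl(f)\|_{L_\infty(\patch_\ell)} \le \mathcal{E}(h_\ell)\|\vu\|_{\mathcal{N}(\patch_\ell)}$. Next I would rewrite the target quantity: since $\shift^*_\ell = \frac{1}{|\glueptset_\ell|}\sum_{\vy\in\glueptset_\ell}\big(v(\vy) - \dfpot_\ell(\vy)\big) = -\frac{1}{|\glueptset_\ell|}\sum_{\vy\in\glueptset_\ell} f(\vy) =: -\bar f$, we have $\dfpot_\ell + \shift^*_\ell - v = f - \bar f$, so the left-hand side of the claim is exactly $\|f - \bar f\|_{L_\infty(\patch_\ell)}$.

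The heart of the argument is then a single averaging step. For $\vx\in\patch_\ell$, write $f(\vx) - \bar f = \frac{1}{|\glueptset_\ell|}\sum_{\vy\in\glueptset_\ell}\big(f(\vx)-f(\vy)\big)$ and estimate each increment with \eqref{MVTsurface}, obtaining $|f(\vx)-f(\vy)| \le \|\surfacecurl(f)\|_{L_\infty(\patch_\ell)}\,\sdist(\vx,\vy)$. Because $\vx$ and every glue point $\vy$ lie in $\patch_\ell$, the geodesic distance is bounded by the patch diameter, and the quasi-uniformity assumption $\mathrm{diam}(\patch_\ell)\le C h_\ell$ converts this into a factor $C h_\ell$. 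Taking the supremum over $\vx$ and combining with the curl bound yields $\|f-\bar f\|_{L_\infty(\patch_\ell)}\le C h_\ell\,\mathcal{E}(h_\ell)\|\vu\|_{\mathcal{N}(\patch_\ell)}$, which is the claim.

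The computations here are routine; the one hypothesis that must be checked rather than computed is that \eqref{MVTsurface} actually applies, i.e.\ that the shortest geodesic from $\vx$ to each glue point $\vy$ stays inside $\patch_\ell$, so that the uniform bound on $\surfacecurl(f)$ over $\patch_\ell$ may be evaluated at the intermediate point $\vx^*$. For the geodesically convex patches used here (disks in $\R^2$ and spherical caps on $\sphere^2$) this holds automatically, so the main obstacle is really just confirming this geometric convexity rather than any analytic difficulty.
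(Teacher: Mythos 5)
Your proof is correct and follows essentially the same route as the paper's: average the error $\dfpot_\ell - v$ over the glue points (the paper does this directly via a triangle inequality rather than through your explicit $f-\bar f$ notation), bound each increment with the Mean Value Theorem \eqref{MVTsurface} together with the local interpolation estimate of Proposition~\ref{prop:localerror}, and convert $\dist(\vx,\vy)$ to $Ch_\ell$ via the assumption $\mathrm{diam}(\patch_\ell)\leq Ch_\ell$. Your closing observation that the patches must be geodesically convex for the MVT intermediate point to remain in $\patch_\ell$ is a detail the paper leaves implicit, but it does not alter the argument.
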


\begin{proof}
Let $\vx\in \patch_\ell$. First we apply the triangle inequality and the Mean Value Theorem to obtain
\begin{eqnarray*}
  \left|\dfpot_\ell(\vx) + \shift_\ell^* - v(\vx)\right| &\leq& \frac{1}{|\glueptset_\ell|}\sum_{\vy\in \glueptset_\ell}\left|\dfpot_\ell(\vx) - v(\vx) - (\dfpot_\ell(\vy) - v(\vy))\right| \\
  &\leq& \frac{1}{|\glueptset_\ell|}\sum_{\vy\in \glueptset_\ell}\left\|\vs_{j} - \vu \right\|_{L_\infty(\patch_\ell)}\text{dist}(\vx,\vy). 
\end{eqnarray*}
Next, an application of Proposition \ref{prop:localerror} and the fact that $\text{diam}(\patch_\ell)\leq C h_\ell$ finishes the proof. 
\end{proof}

Letting $\res^*:= P\shift^* - \rhs$, i.e., the residual in the system \eqref{eq:adjustmentsystem} using the shifts given in the above proposition, with a triangle inequality and using the fact that $h_k\sim h_\ell$ for neighboring patches, we get
\begin{equation}\label{adjustmentresidualbound}
  (\res^*)_\ell^k \leq C h_\ell\mathcal{E}(h_\ell) \|\vu\|_{\mathcal{N}(\patch_\ell)} + C h_\ell \mathcal{E}(h_k) \|\vu\|_{\mathcal{N}(\patch_k)}.
\end{equation}
Applying this to the residual term from \eqref{eq:errorestimates} becomes:
\begin{eqnarray}
\sum_{\ell | \vx\in \patch_\ell,\, \ell\neq k}h_\ell^{-1}(\res^*)_\ell^k &\leq & m C \max_{\ell\,|\,\vx\in \patch_\ell} \mathcal{E}(h_\ell)\|\vu\|_{\mathcal{N}(\patch_\ell)}
\end{eqnarray}
Thus if the shifts are chosen appropriately the method can achieve the same approximation order as that of local interpolation. However, we compute the shifts according to the overdetermined \eqref{eq:adjustmentsystem}. The residual from that system satisfies the following.
\begin{proposition}\label{residualbound}
Let $\shift$ be the least squares solution to \eqref{eq:adjustmentsystem}. The residual $\res:= P\shift - \rhs$ satisfies the bound \[|\res|^2 \leq  m\,C\sum_\ell h_\ell^2\mathcal{E}(h_\ell)^2\|\vu\|^2_{\mathcal{N}(\patch_\ell)}.\]
\end{proposition}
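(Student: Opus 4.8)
The plan is to exploit the defining optimality property of the least squares solution. Since $\shift$ minimizes $|P\shift - \rhs|$ over all shift vectors, its residual is no larger than the residual produced by the explicit shifts $\shift^*$ from Proposition~\ref{prop:potentialshifts}. First I would record the inequality
\[
|\res|^2 = |P\shift - \rhs|^2 \leq |P\shift^* - \rhs|^2 = |\res^*|^2,
\]
which immediately reduces the problem to bounding $|\res^*|^2$, a quantity we already control entrywise through \eqref{adjustmentresidualbound}. This is the crucial reduction: we never need to analyze $\shift$ itself, only to exhibit one sufficiently good competitor.

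Next I would expand $|\res^*|^2$ as a sum over the rows of $P$, i.e.\ over the overlapping pairs $(\ell,k)$ with $\ell<k$, so that $|\res^*|^2 = \sum_{(\ell,k)} \bigl((\res^*)_\ell^k\bigr)^2$. Squaring the entrywise bound \eqref{adjustmentresidualbound} and applying $(a+b)^2\leq 2a^2+2b^2$ gives
\[
\bigl((\res^*)_\ell^k\bigr)^2 \leq C\bigl(h_\ell^2\mathcal{E}(h_\ell)^2\|\vu\|^2_{\mathcal{N}(\patch_\ell)} + h_\ell^2\mathcal{E}(h_k)^2\|\vu\|^2_{\mathcal{N}(\patch_k)}\bigr).
\]
Using the standing fact that $h_\ell\sim h_k$ for neighboring patches lets me replace the stray $h_\ell^2$ in the second term by $h_k^2$, so that each summand splits cleanly into two terms of the same form, one attached to patch $\ell$ and one attached to patch $k$.

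The final step is a counting argument. Summing over all edges $(\ell,k)$, each fixed patch index $j$ contributes its term $h_j^2\mathcal{E}(h_j)^2\|\vu\|^2_{\mathcal{N}(\patch_j)}$ once for every edge incident to it in the patch-intersection graph. By the assumption that each patch intersects at most $m$ others, every vertex of that graph has degree at most $m$, so each such term is counted at most $m$ times across the whole sum. This yields $|\res^*|^2 \leq mC\sum_\ell h_\ell^2\mathcal{E}(h_\ell)^2\|\vu\|^2_{\mathcal{N}(\patch_\ell)}$, and combining with the optimality inequality from the first step completes the proof.

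I expect the main (though modest) obstacle to be the bookkeeping in this last step: one must take care that the mixed term $h_\ell^2\mathcal{E}(h_k)^2\|\vu\|^2_{\mathcal{N}(\patch_k)}$ is correctly reassigned to patch $k$ via $h_\ell\sim h_k$, and that the vertex-degree bound is applied so that the prefactor is the fixed constant $m$ rather than something growing with the total number of patches $M$. Everything else follows directly from least squares optimality together with the already-established residual bound \eqref{adjustmentresidualbound}.
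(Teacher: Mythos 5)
Your proposal is correct and follows essentially the same route as the paper's proof: exhibit the competitor shifts $\shift^*$ from Proposition~\ref{prop:potentialshifts}, invoke least-squares optimality to get $|\res|\leq|\res^*|$, square the entrywise bound \eqref{adjustmentresidualbound} (using $h_\ell\sim h_k$ for neighboring patches), and sum over glue points with each patch counted at most $m$ times. Your write-up merely makes explicit the bookkeeping (the $(a+b)^2\leq 2a^2+2b^2$ step and the vertex-degree counting) that the paper compresses into ``estimate further'' and ``sum over all glue points.''
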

\begin{proof}
Choose any scalar potential $v$ such that $\vu = \surfacecurl(v)$, and let $\shift^*$ be the vector whose $\ell^{th}$ element is $\shift_\ell^*$ as defined in Proposition \ref{prop:potentialshifts}. Then we have $|\res| \leq |\res^*|$. Next, we square the left-most inequality in \eqref{adjustmentresidualbound} and estimate further to get
\begin{equation}
((\res^*)_{\ell}^k)^2\leq C \left(\mathcal{E}(h_\ell)^2h_\ell^2\|\vu\|_{\mathcal{N}(\patch_\ell)}^2 +  \mathcal{E}(h_k)^2h_k^2\|\vu\|_{\mathcal{N}(\patch_k)}^2\right).
\end{equation}
Now sum the estimate over all glue points, and note that each $\patch_\ell$ (and $\patch_k$) will appear in the sum at most $m$ times (the maximum number of patches that intersect any given patch). This gives the result.
\end{proof}

In an attempt to bound the error solely in terms of the point distribution and target function, let us look at an application of this estimate to the residual term from \eqref{eq:errorestimates}. For simplicity, assume that all $h_\ell\sim h$ for all $h_\ell$. Since there are at most $m$ terms in the sum, a Cauchy-Schwarz inequality gives
\begin{eqnarray*}
\sum_{\ell | \vx\in \patch_\ell,\, \ell\neq k}h_\ell^{-1}|r_\ell^k| \leq h^{-1}\sqrt{m}|\res| \leq C m \mathcal{E}(h)\sqrt{\sum_\ell\|\vu\|_{\mathcal{N}(\Omega_\ell)}^2}.
\end{eqnarray*}
Due to the sum over all patches, this bound may or may not match the expected error rates. \revision{It is reasonable to guess that this sum is equivalent to $\|\vu\|^2_{\mathcal{N}(\Omega)}$. Numerical experiments for scalar RBF interpolants, not presented here, suggest that such a sum may be uniformly bounded in the case of a thin plate spline, and may grow very slowly for Mat\'ern kernels. We leave exploring a tight bound for this term as an open question. A} very rough estimate of the sum would introduce a factor of $\sqrt{M}$, where $M$ is the number of patches. In the quasi-uniform case, a volume argument gives $\sqrt{M}\sim h^{-d/2}$. Thus a worst-case scenario is that the method converges according to $\mathcal{E}(h)h^{-d/2}$. However, numerical experiments suggest that the errors decay according to $\mathcal{E}(h)$ (see for example Section \ref{numerics_sphere}) and do not seem to depend on the number of patches - which suggests that the estimate $\mathcal{E}(h)h^{-d/2}$ is pessimistic. 



\section{Numerical experiments}\label{numerics}
In this section, we numerically study the vector RBF-PUM for three different test problems: a div-free field in a star-shaped domain in $\R^2$, a div-free field on $\sphere^2$, and a curl-free field in the unit ball in $\R^3$.  For each of these cases, we numerically test the convergence rates of the method and compare them to the estimates from Section \ref{mainerrorsection}.  The point sets we use in the experiments are all quasiuniform, so rather than compute the mesh-norm $h$ and use this to measure convergence rates, we simply use $h\sim N^{-1/d}$.  

To illustrate the different convergence rates that are possible, we use the inverse multiquadric (IMQ) kernel $ \phi(r) = 1/\sqrt{1 + (\ep r)^2}$ and the Mat\'ern kernel $\phi(r) = e^{-\ep r}\left(1 + (\ep r) + \frac{3}{7}(\ep r)^2 + \frac{2}{21}(\ep r)^3 + \frac{1}{105}(\ep r)^4\right)$.\   The latter kernel is piecewise smooth and the local error from Proposition \ref{prop:localerror}, in terms of $N$, is given by $\mathcal{E}(N) = (\sqrt{N})^{-3.5}$ for $d=2$ (see~\cite{FuselierWright2009:SphereDecomp} for more details). The IMQ kernel is analytic and therefore the local error decreases faster than any algebraic rate.  For scalar interpolation with the IMQ, the local error estimate is $\mathcal{E}(N) = e^{-C\log(N)N^{1/2d}}$~\cite{RiegerZwicknagl_samplinginequalities}, where $C>0$ is a constant.  We demonstrate that this also appears to be the correct rate for the vector case.  While the error estimates are in terms the $\infty$-norm, we also include results on the $2$-norm for comparison purposes.  
Since we are interested in demonstrating the convergence rates from the theory, we fix the shape parameter $\ep$ in all the tests, as using different $\ep$ on a per patch level will lead to different constants in the estimates.  The values were selected so that conditioning of the linear systems \eqref{eq:divfree_linsys} (or \eqref{eq:curlfree_linsys}) is not an issue.  Choosing variable shape parameters in scalar RBF-PUM is explored in~\cite{cavoretto_2016} and may be adapted to the current method, but we leave that to a separate study.  For brevity we report results for one kernel per example, with the IMQ kernel used for the first and third test and the Mat\'ern used for the second.  However, we note that the estimated convergence rates for each kernel were consistent with the theory across all tests. Finally, we set the weighted least squares parameter in \eqref{eq:wght_matrix} to $\eta=4$.  This value produced good results over all the numerical experiments performed.  

\revision{All results were obtained from a MATLAB implementation of the vector RBF-PUM method executed on a MacBook Pro with 2.4 GHz 8-Core Intel Core i9 processor and 32 GB RAM.  No explicit parallelization was implemented.}

\subsection{Div-free field on $\mathbb{R}^2$}\label{sec:numerics_r2}
The target field and domain for this numerical test are defined as follows.  Let the potential for the field be 
\begin{align}
\dfpot^{(1)}(\vx) = -2g(\tfrac{27}{2}\|\vx\|^4) - \frac12 g(27\|\vx\|^2) - 2\sum_{j=0}^{4} g(9\|\vx-\vxi_j\|^2),
\label{eq:pot_star}
\end{align}
where $\vxi_j = (\cos(2\pi j/5+0.1),\sin(2\pi j/5+\frac12))$ and
\begin{align}
g(r) = \exp(r)/(1 + \exp(r))^2.
\label{eq:generator}
\end{align}  
The target domain is set from the potential as $\Omega^{(1)}=\{\vx\in\R^2|\dfpot^{(1)}(\vx) \leq -\frac{1}{10}\}$, and target div-free vector field is $\vu_{\rm div}^{(1)} = \rot \dfpot^{(1)}$.  This gives a star-like domain with a non-trivial field that is tangential to $\partial\Omega$; see Figure \ref{fig:pot_field_star} for a visualization of the potential and field.  
\begin{figure}[h]
\centering
\begin{tabular}{cc}
\includegraphics[width=0.43\textwidth]{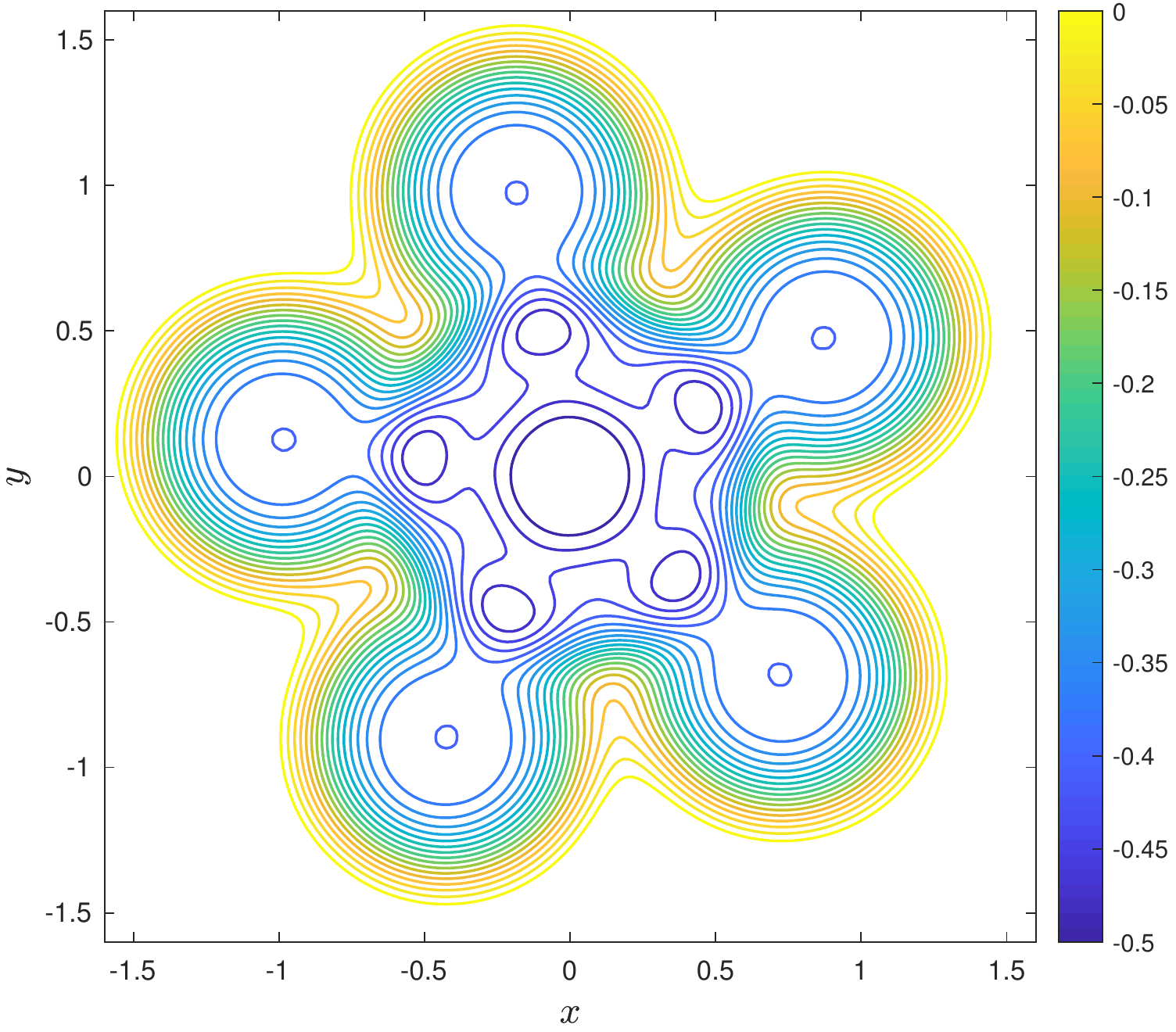} & 
\includegraphics[width=0.38\textwidth]{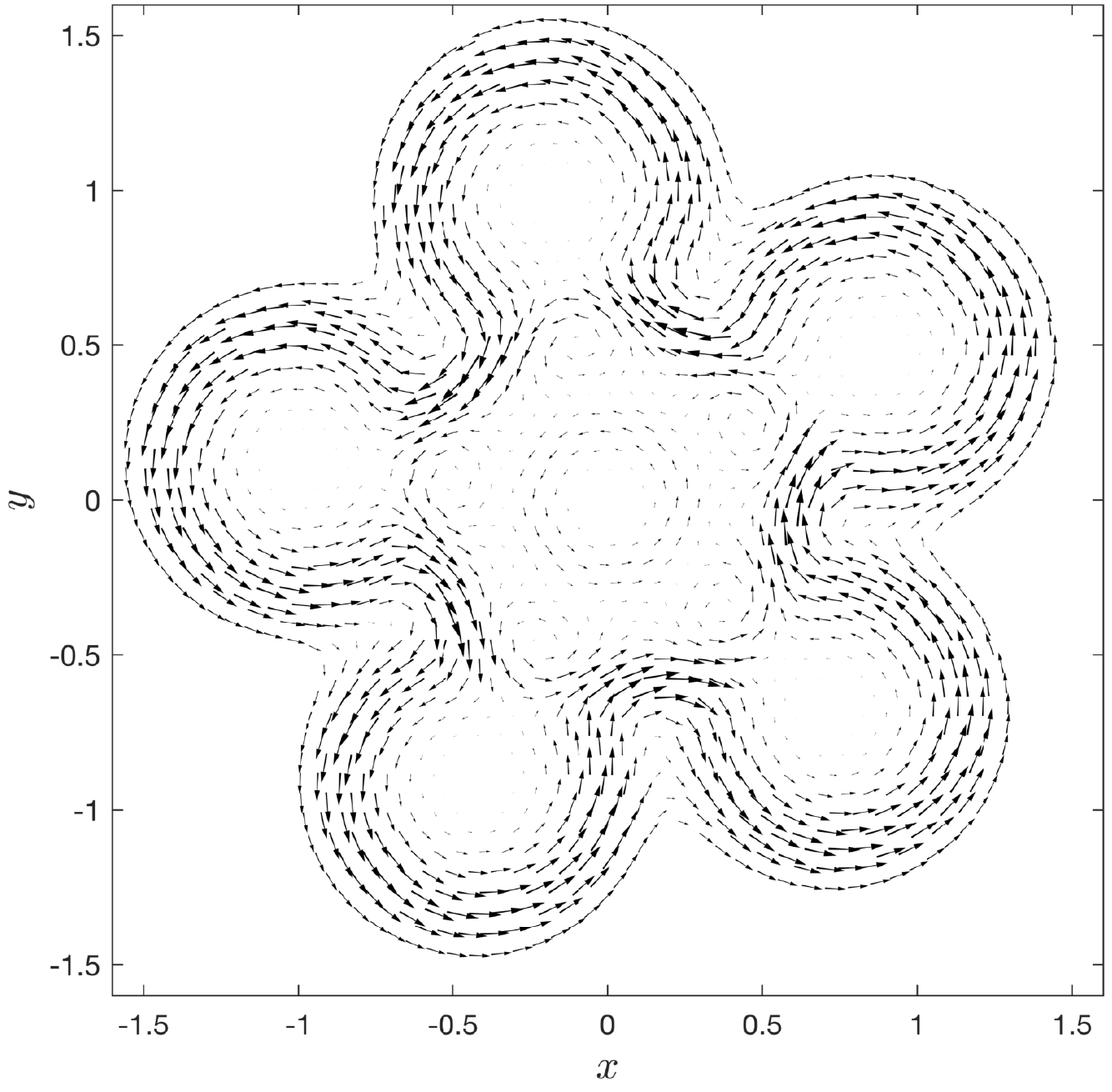}
\end{tabular}
\caption{Contours of the potential $\dfpot^{(1)}$ (left) and corresponding div-free velocity field $\vu_{\rm div}^{(1)}$ (right) for the numerical experiment on $\R^2$.}
\label{fig:pot_field_star}
\end{figure}

\begin{figure}[htb]
\centering
\begin{tabular}{cc}
\includegraphics[width=0.45\textwidth]{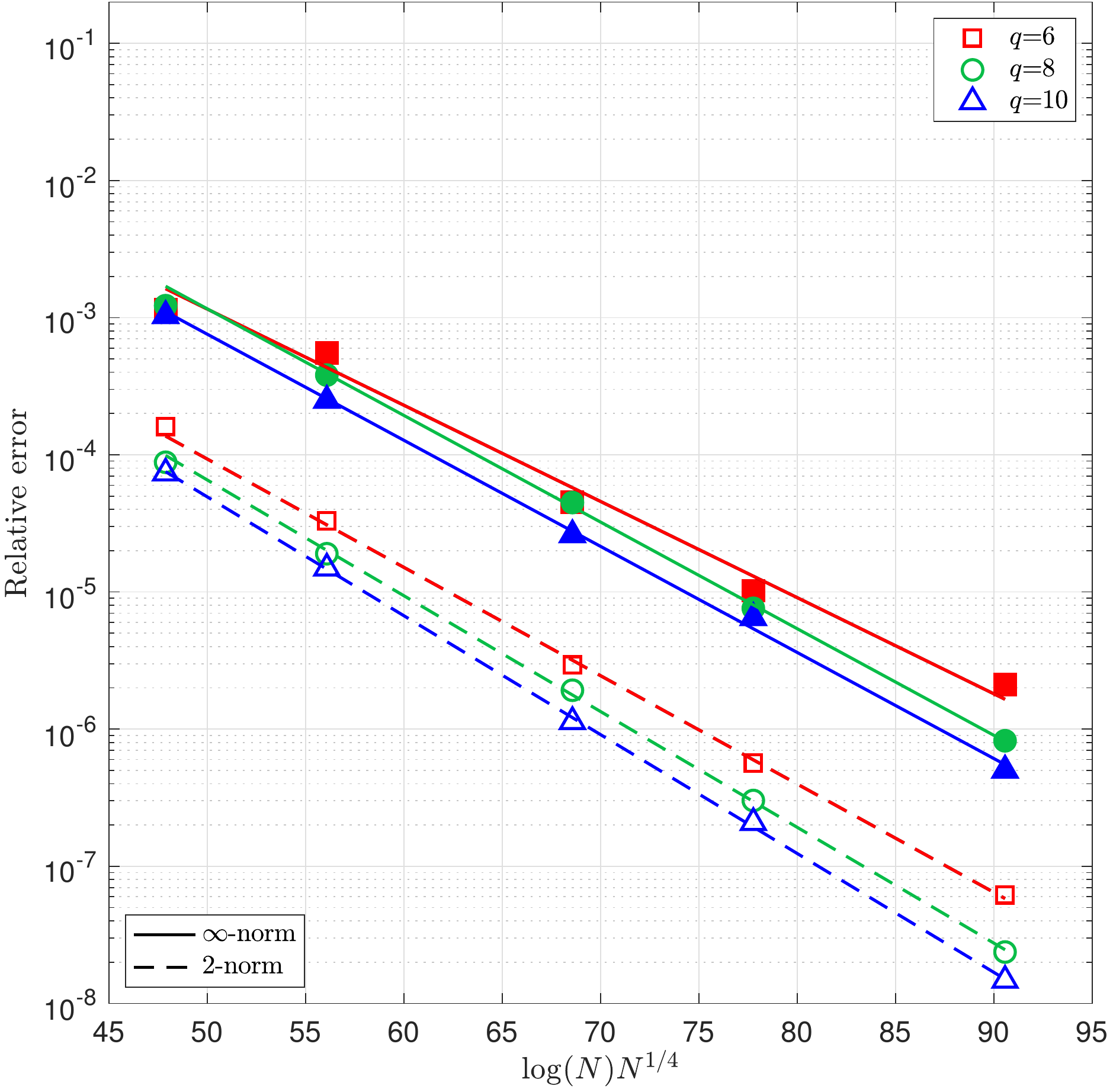} & 
\includegraphics[width=0.45\textwidth]{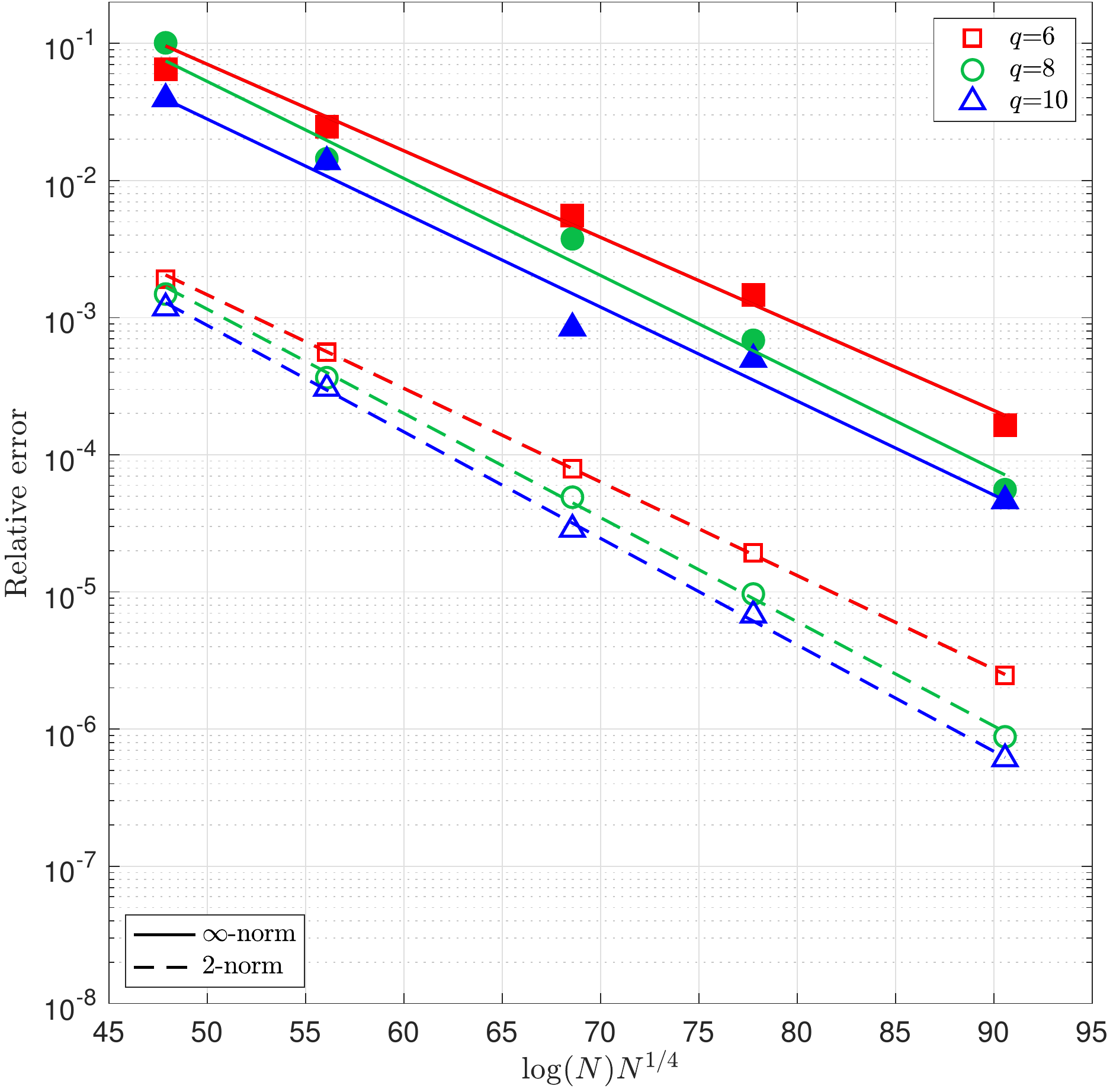} \\
(a) Errors for the potential $\dfpot^{(1)}$ & (b) Errors for the field $\vu_{\rm div}^{(1)}$
\end{tabular}
\caption{Convergence results for the numerical experiment on the star domain in $\R^2$ for the IMQ kernel and different values of $q$.  Filled (open) markers correspond to the relative $\infty$-norm (2-norm) errors and solid (dashed) lines indicate the fit to the estimate $\mathcal{E}(N) = e^{-C\log(N)N^{1/4}}$, without the first values included.}
\label{fig:convg_star}
\end{figure}

The node sets $X$ for this test were initially generated from DistMesh~\cite{Persson_distmeshpaper}, but then perturbed by a small amount to remove any regular structures.  The sizes of the node sets for the tests are $N=11149$, $17405$, $30943$, $44570$, and $69635$\footnote{\revision{These node sets were produced from DistMesh when setting the ``spacing'' parameter to $h0=0.025,0.02,0.015,0.0125,0.01$}}.  We estimate $A$ in \eqref{eq:H_heuristic} to be 6, and  use an overlap parameter for the patches of $\delta = 1/2$.  We test three different values of $q$ to see how the errors are effected by increasing the nodes per patch.  For $q=6,8,10$, there are an average of $63,112,173$ nodes per patch, respectively.  The boundaries create some variability in the nodes per patch and lead to minimum values of $32,57,85$ and the maximums of $109,191,300$, respectively.  As mentioned above, we only report results for the IMQ kernel, for which the shape parameter is set to $\ep=13$ for all tests.  Errors in the approximations of the target potential and field are computed at a dense set of 94252 points over the domain.  Errors in the approximation of the target potential are computed after first normalizing the approximant and the potential to have a mean of zero over the evaluation points.  For each $N$ and $q$, the error reported is the average of the $\infty$-norm ($2$-norm) errors using 20 different random perturbations of the initial node set $X$.  \revision{This reduces fluctuations in the errors caused by particularly good samples of the target field.  We observed that the relative standard deviation in the norms of the errors using this sampling technique varied from 5\% to 10\% for the 2-norm and 20\% to 40\% for the $\infty$-norm across the $N$ we used.}

Figure \ref{fig:convg_star} displays the relative $\infty$-norm and $2$-norm errors in the approximation of the target potential and field as a function of $\log(N)N^{1/4}$.  Included in the figures are the lines of best fit to the errors using the error estimate $\mathcal{E}(N) = e^{-C\log(N)N^{1/4}}$ from scalar RBF theory.  We see from the figure that this error estimate provides a good fit to both the $\infty$-norm and $2$-norm errors for the potential and the field.  The $\infty$-norm errors for the potential have more variability especially for $q=6$, but the $2$-norm errors are quite consistent.  As expected, the errors in reconstructing the potential are lower than those for reconstructing the field, and the $2$-norm errors are lower than the $\infty$-norm errors.  Increasing $q$ leads to a consistent decrease in the $2$-norm errors, but the decrease is more variable for the $\infty$-norm errors.



\subsection{Div-free field on $\sphere^2$}\label{numerics_sphere}
Let $\vx=(x,y,z)\in\sphere^2$, and the potential for the target field be defined as
\begin{align}
\dfpot^{(2)}(\vx) = -\frac{1}{1 + e^{-20(z+1/\sqrt{2})}} - \frac{1}{1 + e^{-20(z-1/\sqrt{2})}} - 3\sum_{j=0}^5 (-1)^j  g(\|\vx-\vy_j\|^2,a_j),
\label{eq:pot_star}
\end{align}
where $g$ is given in \eqref{eq:generator}, $\vy_j = (\cos(\lambda_j)\cos(\theta_j),\sin(\lambda_j)\cos(\theta_j),\sin(\theta_j))$ for $\{\lambda_j\}_{j=0}^5=$ $\{0.05$,$1.1$,$2.12$, $3.18$,$4.22$,$5.26\}$ and  $\{\theta_j\}_{j=0}^5=$$\{0.79,-0.82$,$0.76$,$-0.81$,$0.8$,$-0.77\}$, and $a_j = 4 + j/2$.  The div-free field is then given as $\vu_{\rm div}^{(2)} = \rot{\dfpot^{(2)}}$.  The values used in \eqref{eq:pot_star} were chosen to produce a zonal jet in the mid-latitudes with three superimposed vortices in each of the northern and southern hemispheres; see Figure \ref{fig:pot_field_sphere} for a visualization of the potential and field.

\begin{figure}[h]
\centering
\begin{tabular}{cc}
\includegraphics[width=0.41\textwidth]{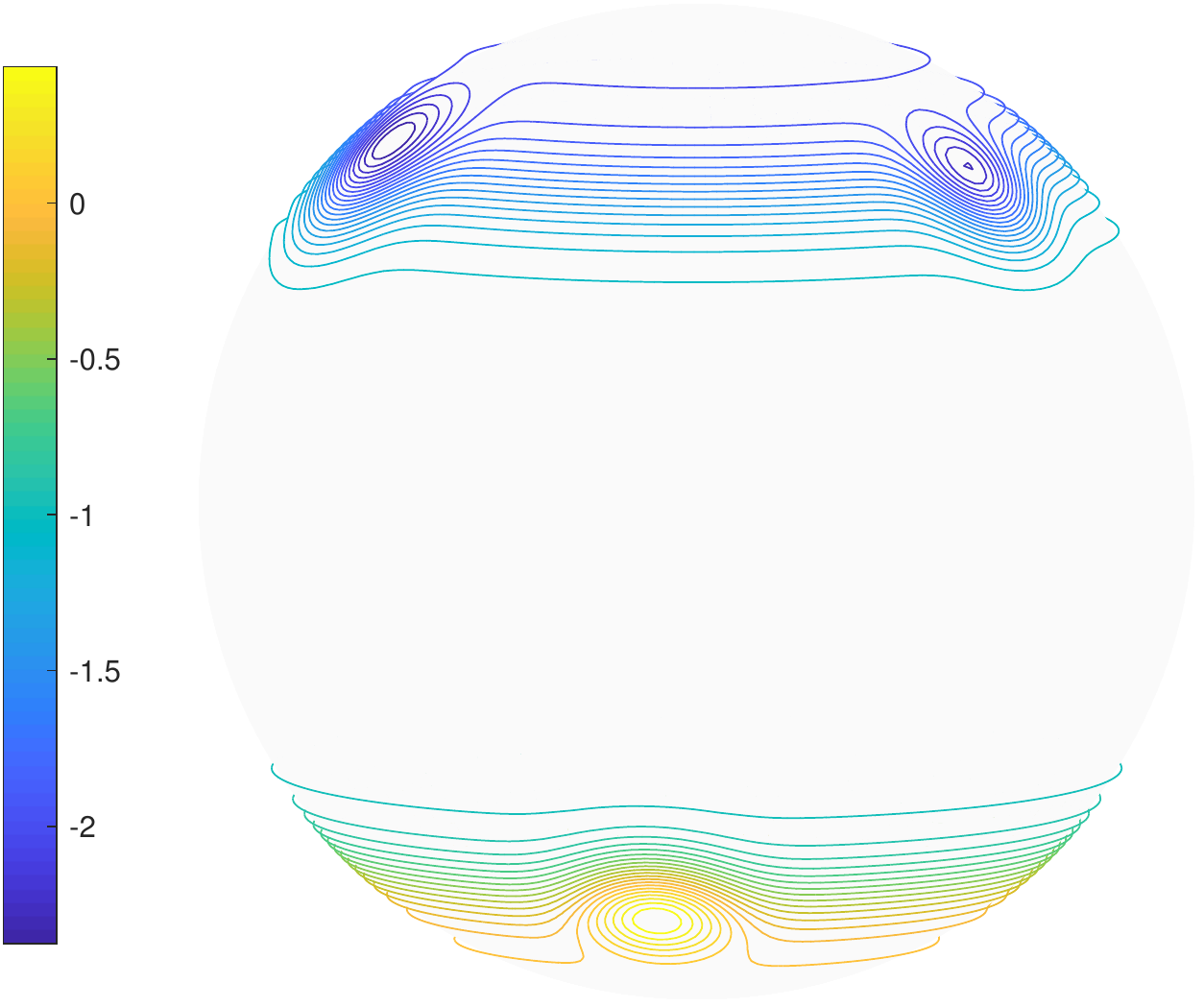} & 
\includegraphics[width=0.35\textwidth]{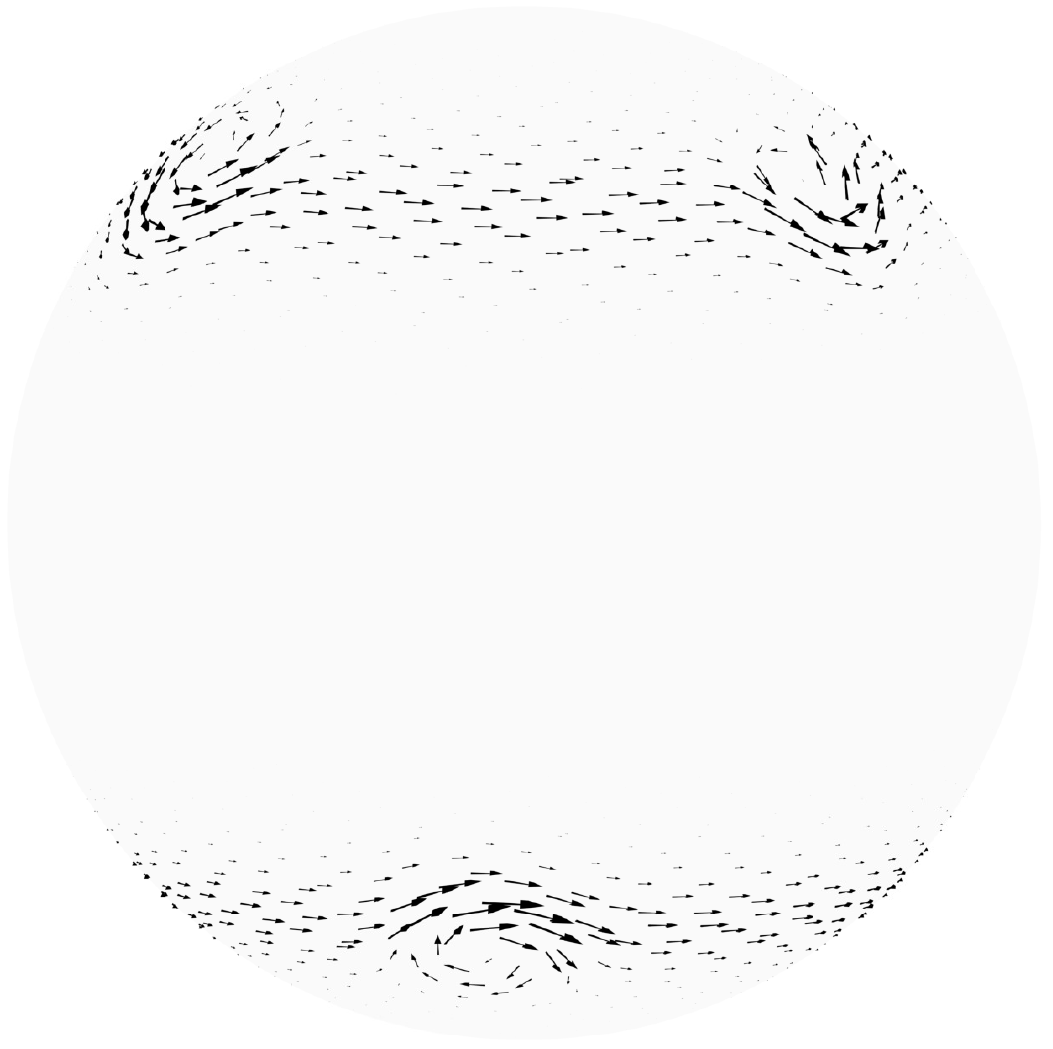}
\end{tabular}
\caption{Contours of the potential $\dfpot^{(2)}$ (left) and corresponding div-free velocity field $\vu_{\rm div}^{(2)} $(right) for the numerical experiment on $\sphere^2$.}
\label{fig:pot_field_sphere}
\end{figure}
\begin{figure}[h]
\centering
\begin{tabular}{cc}
\includegraphics[width=0.45\textwidth]{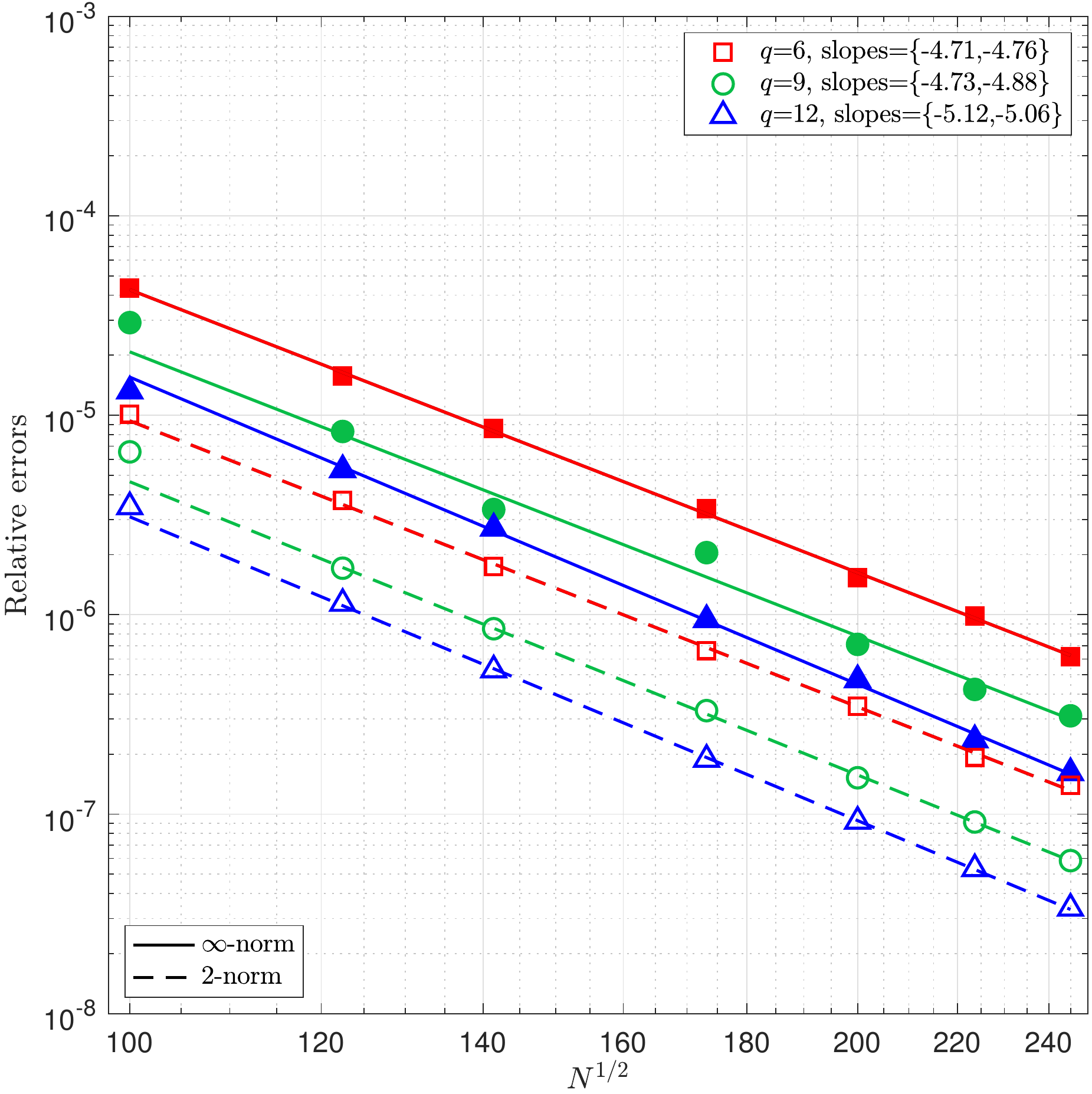} & 
\includegraphics[width=0.45\textwidth]{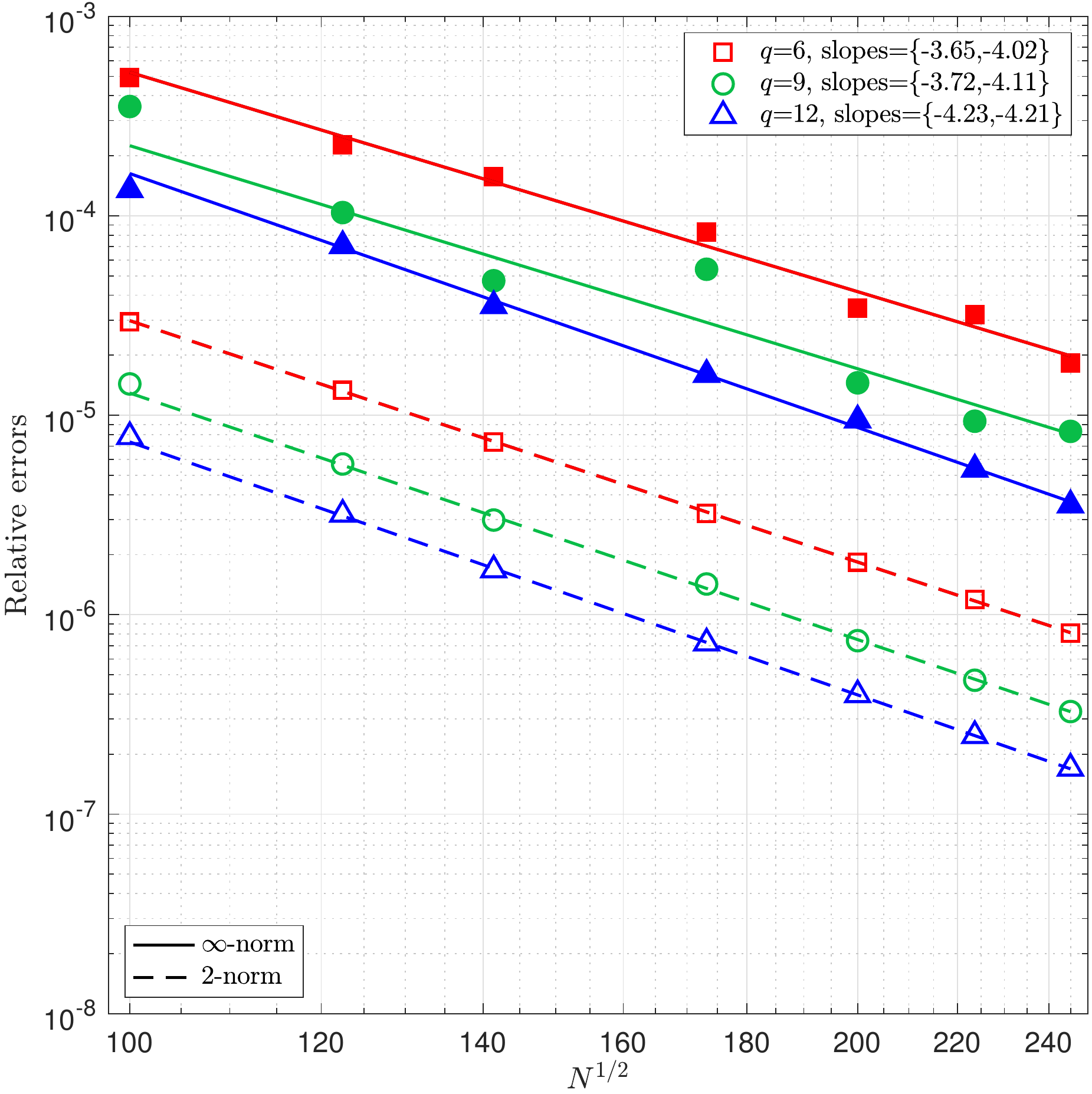} \\
(c) Errors for the potential $\dfpot^{(2)}$ & (d) Errors for the field $\vu_{\rm div}^{(2)}$
\end{tabular}
\caption{Convergence rates for the numerical experiment on $\sphere^2$ for the Mat\'ern kernel and different values of $q$.  Filled (open) markers correspond to the relative $\infty$-norm (2-norm) errors and solid (dashed) lines indicate the lines of best fit to the $\infty$-norm (2-norm) errors as a function of $\sqrt{N}$ on a loglog scale.  The legend indicates the slopes of these lines with the first number corresponding to the $\infty$-norm and the second the 2-norm, which give estimates for the algebraic convergence rates.}
\label{fig:convg_sphere}
\end{figure}

The node sets $X$ for this test are chosen as Hammersley nodes, which give quasiuniform, but random sampling points for $\sphere^2$~\cite{SpherePts}.  The sizes of the node sets for the tests are $N=10000$, $15000$, $20000$, $30000$, $40000$, $50000$ and $60000$.  We use $A=4\pi$ in \eqref{eq:H_heuristic} and set the overlap parameter to $\delta = 9/16$.  We again use three different values of $q$ to see how the errors are effected by increasing the nodes per patch.  For $q=6,9,12$, there are an average of $63,143,252$ nodes per patch, respectively.  Since there are no boundaries for this domain, the number of nodes per patch is much more consistent across all patches.  The minimum nodes per patch are $58,137,245$ and the maximums are $69,150,261$, respective to the $q$ values.  For this example, we only report results for the Mat\'ern kernel, for which the shape parameter is set to $\ep=7.5$ for all tests.  Errors in the approximations of the target potential and field are computed at a quasiuniform set of 92163 points over $\sphere^2$.  Errors in the approximation of the target potential are again computed after first normalizing the approximant and the potential to have a mean of zero over the evaluation points. Similar to the previous experiment, for each $N$ and $q$, the error reported is the average of the $\infty$-norm ($2$-norm) errors from 20 different random rotations of the initial Hammersley node set $X$.  \revision{We observed similar results on the relative standard deviations of the norms of the errors as the previous experiment using this sampling technique.}

\begin{figure}[h]
\centering
\includegraphics[width=0.75\textwidth]{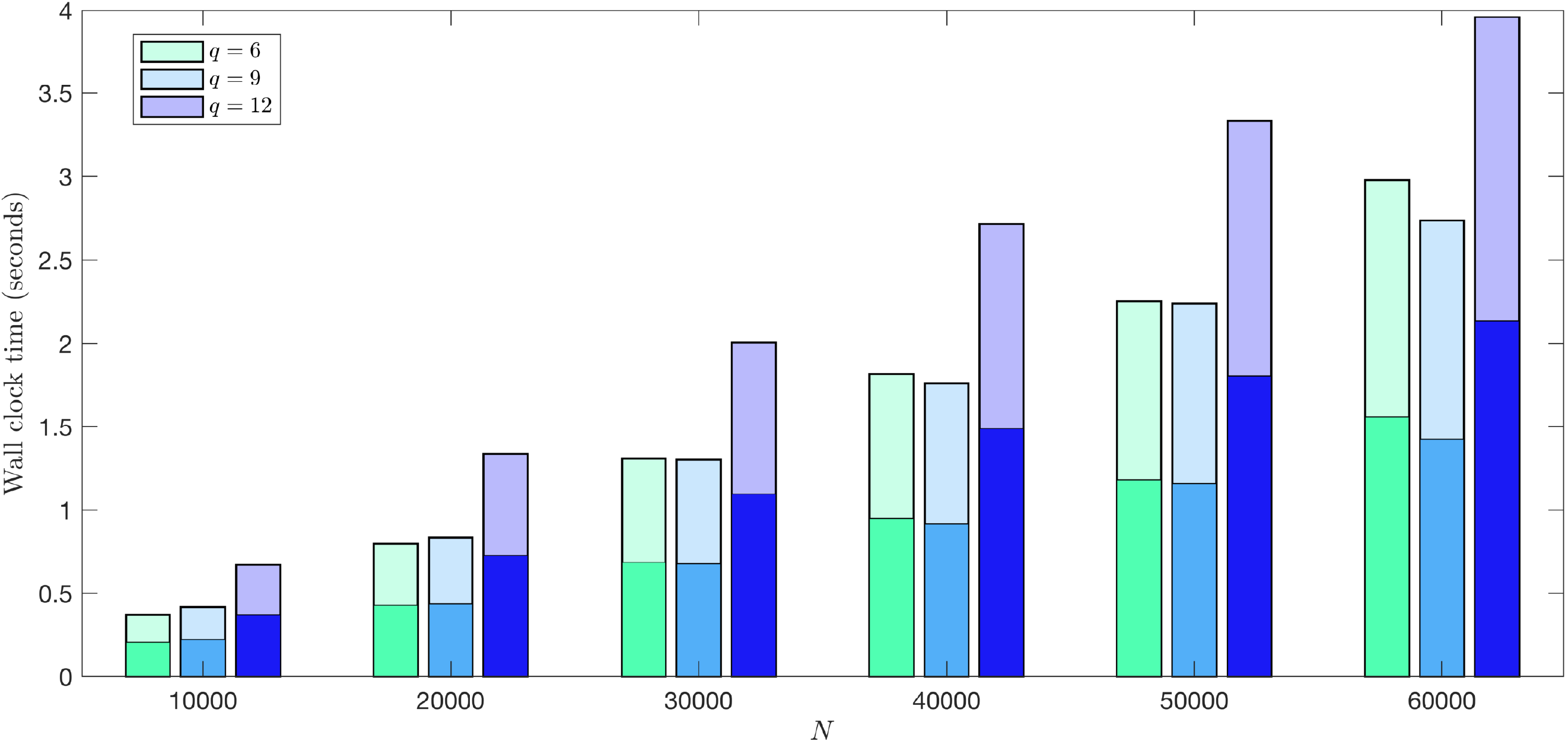}
\caption{\revision{Timing results for the numerical experiment on $\sphere^2$ with different values of $q$.  The darker region of each bar marks the time it takes to compute the interpolation coefficients on each patch and solve for the potential shifts, while the full bar includes this time and the time it takes to evaluate the approximant of the field and the potential at $N$ points.}}
\label{fig:timing_sphere}
\end{figure}

Figure \ref{fig:convg_sphere} displays the relative $\infty$-norm and $2$-norm errors in the approximation of the target potential and field as a function of $N^{1/2}$.  Included in the figure are the lines of best fit to the log of the errors vs.\ the log of $N^{1/2}$ for each $q$, and the slopes of these lines are reported in the legend of the figure (where the first number is for $\infty$-norm and second for the $2$-norm).  We see from this figure that the computed rates of convergence for the $\infty$-norm are slightly higher than the theoretical rate of $-3.5$.  Thus the residual estimate from Proposition \ref{residualbound} is not leading to a reduction in the convergence rates as discussed at the end of Section \ref{mainerrorsection}.   We also see from the figure that the estimated rates for the $2$-norm errors are higher than the $\infty$-norm errors as one would expect.  Finally, similar to the previous experiment, we see that the errors in reconstructing the potential are lower than those for reconstructing the field.

\revision{We also display timing results for this experiment in Figure \ref{fig:timing_sphere}.  For these results, we scaled the evaluation points with $N$ and measured the time for the fitting phase of the method (determining the interpolation coefficients on each patch and the potential shifts) and the evaluation phase (evaluating the approximants of the field and potential on each patch and combining these using the PU weight functions).  The results for $q=9$ and $q=12$ show a clear linear scaling with $N$, but the rate appears to be a bit higher for $q=6$, which we anticipate is due to not being in the asymptotic range of $N$ for this case.  Also, the predicted $\bigO(N\log N)$ complexity is most likely not visible over the range of $N$ considered.  In all the results, we see that the evaluation phase takes less time than the fitting phase, which is expected since the cost for this phase is $\bigO(n^2)$ per patch vs.\ $\bigO(n^3)$ for fitting.  Interestingly, with this serial version of the code, $q=9$ is overall the fastest.  Since the number of patches is inversely proportional to $q^2$, these results indicate that there is an optimal value that balances solving fewer larger systems to more smaller systems.}

\subsection{Curl-free field on the unit ball}\label{numerics_r3}
The target curl-free field for this test is generated as follows.  Let $g(r,a) = (a + r^2)^{-1/2}$ and define the following potential:
\begin{align}
\dfpot^{(3)}(\vx) =  -\frac14 g(\|\vx\|,0.1) + \frac18 \sum_{j=1}^12 g(\|\vx-\vxi_j\|,0.04),
\label{eq:pot_ball}
\end{align}
where $\{\vx_j\}_{j=1}^{12}$ are the vertices of a regular icosahedron with each vertex a distance of 2/3 from the origin. The target curl-free is then generated by $\vu_{\rm curl}^{(3)} = -\nabla \dfpot^{(3)}$.   This field can be interpreted as the (idealized) electric field that is generated from a negative (smoothed) point charge at the origin, surrounded by 12 positive (smoothed) point charges, equidistance from one another; see Figure \ref{fig:nodes_pot_ball}(a) for a visualization of the potential and field.

\begin{figure}[h]
\centering
\begin{tabular}{cc}
\includegraphics[width=0.4\textwidth]{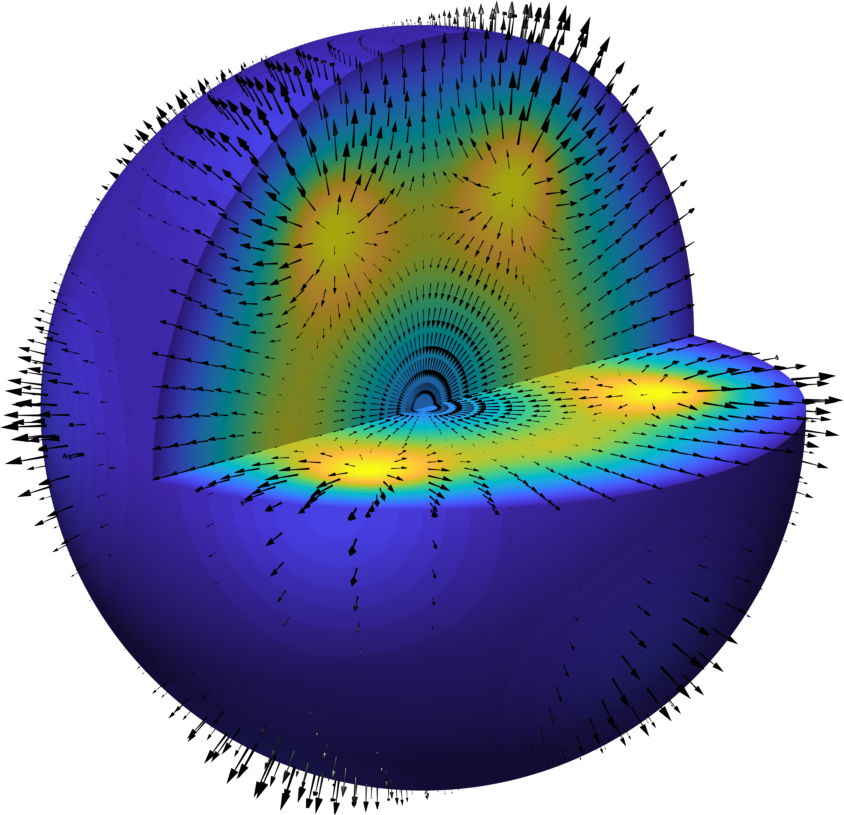} & \includegraphics[width=0.37\textwidth]{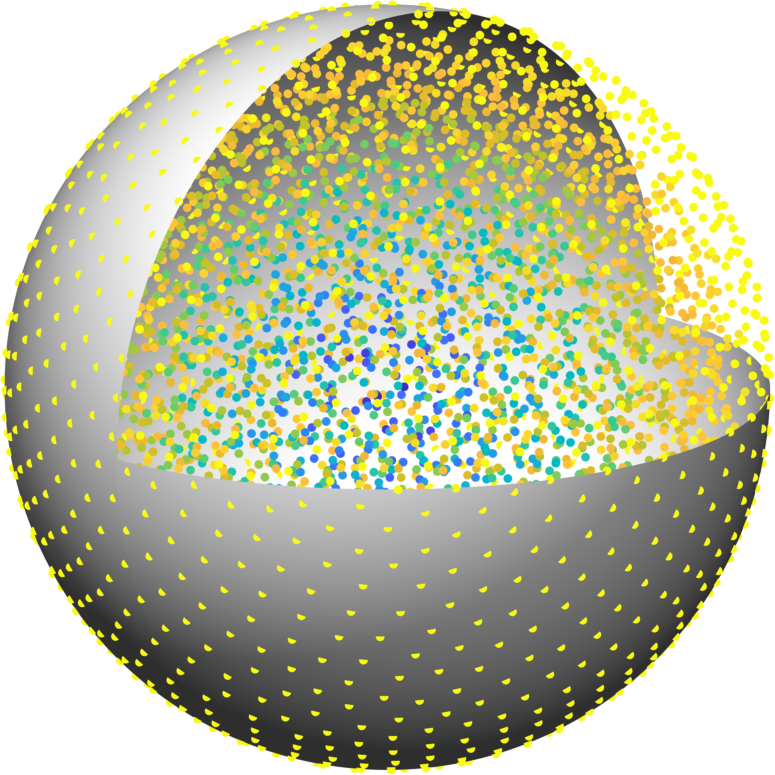}\\
(a) Potential and field & (b) Nodes
\end{tabular}
\caption{(a) Visualization of the potential $\cfpot^{(3)}$ and corresponding curl-free velocity field $\vu_{\rm curl}^{(3)} = -\nabla \cfpot^{(3)}$ for the numerical experiment on the unit ball.  (b) Example of $N=4999$ node set (small solid disks) used in the numerical experiment on the unit ball, where colors of the nodes are proportional to their distance from the origin (yellow=1, green = 0.5, blue=0).  The plots in both figures show the unit ball with a wedge removed to aid in the visualization.}
\label{fig:nodes_pot_ball}
\end{figure}


\begin{figure}[h]
\centering
\begin{tabular}{cc}
\includegraphics[width=0.45\textwidth]{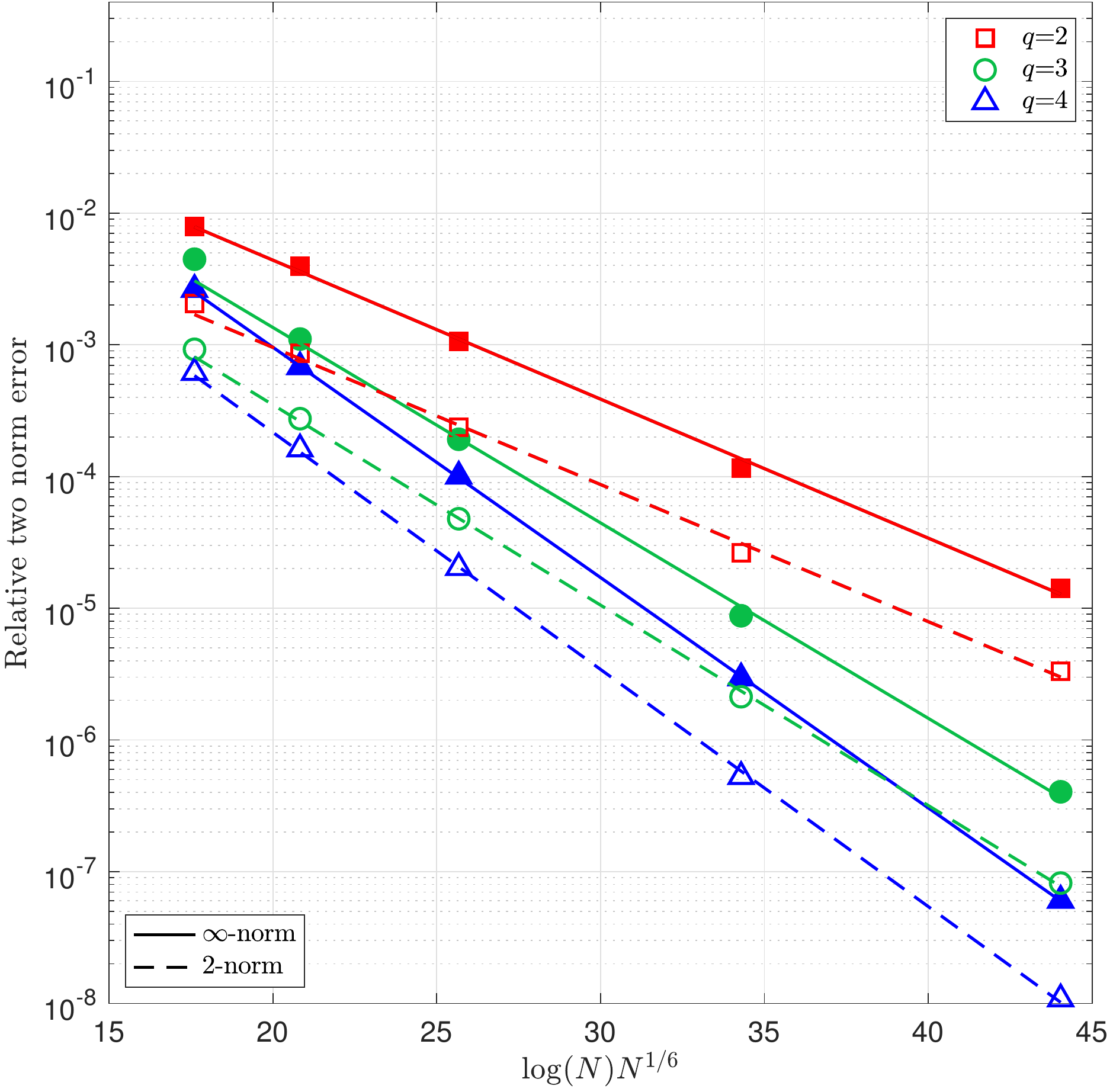} & 
\includegraphics[width=0.45\textwidth]{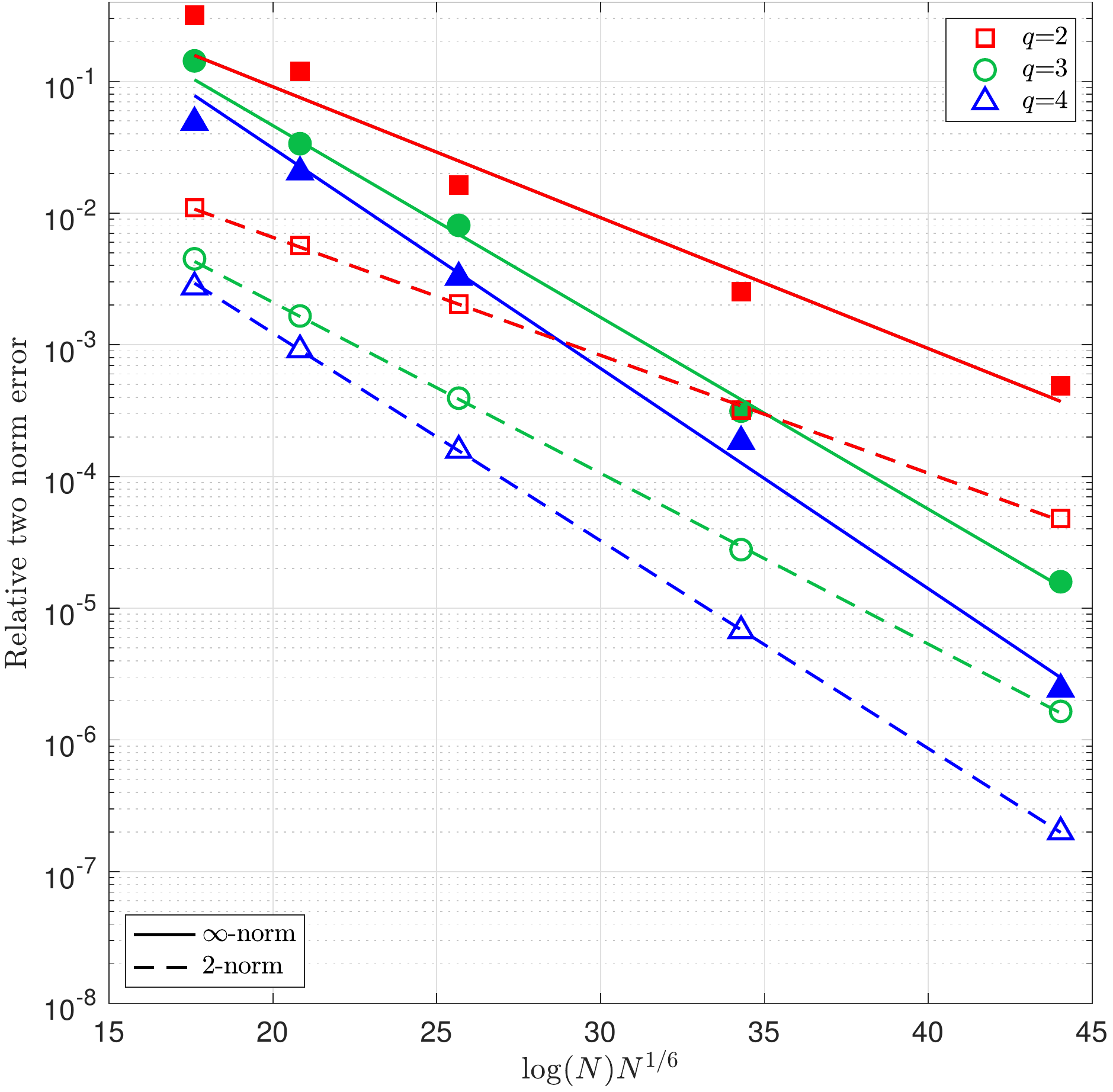} \\
(a) Errors for the potential $\cfpot^{(3)}$ & (b) Errors for the field $\vu_{\rm curl}^{(3)}$
\end{tabular}
\caption{Convergence results for the numerical experiment on the unit ball in $\R^3$ for the IMQ kernel and different values of $q$.  Filled (open) markers correspond to the relative $\infty$-norm (2-norm) errors and solid (dashed) lines indicate the fit to the expected error estimate $\mathcal{E}(N) = e^{-C\log(N)N^{1/6}}$, without the first values included.}
\label{fig:convg_ball}
\end{figure}

The node sets $X$ for this test are obtained from the meshfree node generator described in~\cite{ShankarKirbyFogelson}, which produces quasiuniform but unstructured nodes in general domains; see Figure \ref{fig:nodes_pot_ball} (b) for an example of the nodes used for the unit ball. The sizes of the node sets for the tests are $N=4999$, $9103$, $19636$, $59116$, and $158474$\footnote{\revision{These node sets were produced from the node generator~\cite{ShankarKirbyFogelson} when setting the ``spacing'' parameter to $h0=0.1, 0.08,0.06,0.04,0.028$}}.  We use $A=4/3\pi$ in \eqref{eq:H_heuristic} and an overlap parameter of $\delta = 1/4$.  We again test three different values of $q$: $q=2,3,4$.  For $q=2$, the minimum, average, and maximum nodes per patch are $18$, $37$, $83$, for $q=3$ these values are $72$, $120$, $238$, and for $q=4$ these values are $186$, $271$, $512$.  As with the first experiment, we only present results for the IMQ kernel, for which the shape parameter is set to $\ep=4$ for all tests.  Errors in the approximations of the target potential and field are computed at a set of 208707 points over the unit ball.  Errors in the approximation of the target potential are again computed after first normalizing the approximant and the potential to have a mean of zero over the evaluation points. Similar to the previous experiments, for each $N$ and $q$, the error reported is the average of the $\infty$-norm ($2$-norm) errors from 20 different random rotations of the initial node set $X$.

Figure \ref{fig:convg_ball} displays the relative $\infty$-norm and $2$-norm errors in the approximation of the target potential and field as a function of $\log(N)N^{1/6}$.  As in the first experiment, we have included the lines of best fit to the errors, but now using $\mathcal{E}(N) = e^{-C\log(N)N^{1/6}}$.  We see from the Figure that the error estimate again generally provides a good fit to both the $\infty$-norm and $2$-norm errors for the potential and the field.  The $\infty$-norm errors deviate more from the estimates than the $2$-norm errors, especially for field in the $q=2$ case.  However, for this case the minimum number of points per patch can be quite small. 

\begin{remark}
\revision{In practice, there are several parameters a user needs to choose in the algorithm that effect the computational cost and accuracy.  In the experiments reported here, and several others not reported, we have explored these parameters and come up with the following suggestions.  For the $q$ parameter, which controls the average nodes per patch, we recommend a value in the range of $8\leq q \leq 9$ for 2D problems and $3\leq q \leq 4$ for 3D problems.  For the overlap parameter, $\delta$, we recommend a value in the range $1/2 \leq \delta \leq 3/4$.  For the shape parameter $\ep$, we recommend choosing it as small as possible on each patch before ill-conditioning sets in when solving the local linear systems~\eqref{eq:divfree_linsys}.  This is similar to the method~\cite{SWFKJSC2014} used for generating RBF finite difference formulas.  For smooth vector fields, this typically gives the best accuracy for a given $N$.}
\end{remark}

\section{Concluding remarks}
We have presented a new method based on div/curl-free RBFs and PUM for approximating div/curl-free vector fields in $\R^2$ and $\sphere^2$, and for curl-free fields in $\R^3$. The method produces approximants that are analytically div/curl-free and also produces an approximant potential for the field at no additional cost. For quasi-uniform samples, we have shown how the parameters can be selected so that the computational complexity of the method is $\bigO(N\log N)$.   We have proved error estimates for the approximants based on local estimates for the div/curl-free interpolants on the PU patches.  We have also demonstrated the high-order convergence rates of the method on three different test problems with samples ranging from thousands to hundreds of thousands of nodes---all done on a standard laptop.  

While we have only focused on div/curl-free interpolation over local patches, a future area to explore is to instead use a least squares approach similar to the one used for scalar RBFs in~\cite{Larsson2017}.  Here one can choose fewer centers in the local patches for the div/curl-free RBFs than data samples, a technique referred to as regression splines in the statistics literature~\cite[ch. 19]{MeshFree_Mat}.  This has the benefit of further reducing the cost of the local patch solves for the approximation coefficients and could provide some regularization. Another future area to explore is the adaption of stable algorithms for ``flat'' RBFs~\cite{FLF11,FaMC12} to the div/curl-free RBFs.  These algorithms are especially important in scalar RBF-PUM methods based on smooth RBFs for reaching high accuracies~\cite{Larsson2017}.  Some work has been done along these lines for $\sphere^2$ in~\cite{DRAKE2020109595}, but not for the local setting on patches.  A final promising area for future research is in developing adaptive algorithms for the method along the lines of~\cite{cavoretto_2016}.


\section*{Acknowledgments}
We thank Elisabeth Larsson for helpful discussions regarding the PU patch distribution algorithm and Varun Shankar for generating the node sets used for the unit ball example. KPD's work was partially supported by the SMART Scholarship funded by The Under Secretary of Defense-Research and Engineering, National Defense Education Program/BA-1, Basic Research. GBW's work was partially supported by National Science Foundation grant 1717556. 

\bibliographystyle{siamplain}
\bibliography{references}

\begin{thebibliography}{10}

\bibitem{KevinThesis}
{\sc K.~A. Aiton}, {\em {A Radial Basis Function Partition of Unity Method for
  Transport on the Sphere}}, master's thesis, Boise State University, USA,
  2014.

\bibitem{AmodeiBenbourhim_1991}
{\sc L.~Amodei and M.~N. Benbourhim}, {\em A vector spline approximation}, J.
  Approx. Theory, 67 (1991), pp.~51--79.

\bibitem{Babuska_PU}
{\sc I.~Babu\v{s}ka and J.~M. Melenk}, {\em The partition of unity method},
  Int. J. Numer. Meths. Eng., 40 (1997), pp.~727--758.

\bibitem{BAO2017183}
{\sc Y.~Bao, A.~Donev, B.~E. Griffith, D.~M. McQueen, and C.~S. Peskin}, {\em
  An immersed boundary method with divergence-free velocity interpolation and
  force spreading}, J. Comput. Phys., 347 (2017), pp.~183--206.

\bibitem{Bhatia2013}
{\sc H.~Bhatia, G.~Norgard, V.~Pascucci, and P.-T. Bremer}, {\em The
  {H}elmholtz-{H}odge decomposition{\textemdash}a survey}, {IEEE} Transactions
  on Visualization and Computer Graphics, 19 (2013), pp.~1386--1404.

\bibitem{cavoretto_2010}
{\sc R.~Cavoretto and A.~De~Rossi}, {\em Fast and accurate interpolation of
  large scattered data sets on the sphere}, Comput. Appl. Math.,  (2010),
  pp.~1505--1521.

\bibitem{cavoretto_15}
{\sc R.~Cavoretto and A.~{De Rossi}}, {\em A trivariate interpolation algorithm
  using a cube-partition searching procedure}, SIAM J. Sci. Comput., 37 (2015),
  pp.~A1891--A1908.

\bibitem{cavoretto_2019}
{\sc R.~Cavoretto, A.~De~Rossi, G.~E. Fasshauer, M.~J. McCourt, and
  E.~Perracchione}, {\em Anisotropic weights for {RBF}-{PU} interpolation with
  subdomains of variable shapes}, in Radu F., Kumar K., Berre I., Nordbotten
  J., Pop I. (eds) Numerical Mathematics and Advanced Applications ENUMATH
  2017. Lecture Notes in Computational Science and Engineering, Springer, Cham,
  2019.

\bibitem{CaDeRoPe:2015}
{\sc R.~Cavoretto, A.~{De Rossi}, and E.~Perracchione}, {\em {Partition of
  unity interpolation on multivariate convex domains}}, Int. J. Model. Simul.
  Sci. Comp., 06 (2015), p.~1550034.

\bibitem{cavoretto_2016}
{\sc R.~Cavoretto, A.~De~Rossi, and E.~Perracchione}, {\em {RBF}-{PU}
  interpolation with variable subdomain sizes and shape parameters}, in AIP
  Conference Proceedings, vol.~1776, AIP Publishing, 2016, p.~070003.

\bibitem{CoeEtAl2008}
{\sc D.~{Coe}, E.~{Fuselier}, N.~{Ben{\'\i}tez}, T.~{Broadhurst}, B.~{Frye},
  and H.~{Ford}}, {\em {LensPerfect}: Gravitational lens mass map
  reconstructions yielding exact reproduction of all multiple images},
  Astrophys. J, 681 (2008), pp.~814--830.

\bibitem{GraphTheory}
{\sc A.~Dharwadker and S.~Pirzad}, {\em Graph Theory}, CreateSpace Independent
  Publishing Platform, North Charleston, SC, USA, 2011.

\bibitem{DoCarmo}
{\sc M.~P. do~Carmo}, {\em Differential forms and applications}, Universitext,
  Springer-Verlag, Berlin, 1994,
  \url{https://doi.org/10.1007/978-3-642-57951-6},
  \url{https://doi.org/10.1007/978-3-642-57951-6}.
\newblock Translated from the 1971 Portuguese original.

\bibitem{DoduRabut_2004_DFCFInterp}
{\sc F.~Dodu and C.~Rabut}, {\em Irrotational or divergence-free
  interpolation}, Numer. Math., 98 (2004), pp.~477--498.

\bibitem{DRAKE2020109595}
{\sc K.~P. Drake and G.~B. Wright}, {\em A stable algorithm for divergence-free
  radial basis functions in the flat limit}, J. Comput. Phys., 417 (2020),
  p.~109595.

\bibitem{FPLM18}
{\sc M.~Fan, D.~Paul, T.~C.~M. Lee, and T.~Matsuo}, {\em Modeling tangential
  vector fields on a sphere}, Journal of the American Statistical Association,
  113 (2018), pp.~1625--1636.

\bibitem{Farrell2016}
{\sc P.~Farrell, K.~Gillow, and H.~Wendland}, {\em {Multilevel interpolation of
  divergence-free vector fields}}, IMA J. Numer. Anal., 37 (2016),
  pp.~332--353.

\bibitem{MeshFree_Mat}
{\sc G.~E. Fasshauer}, {\em Meshfree Approximation Methods with MATLAB,
  Interdisciplinary Mathematical Sciences}, World Scientific Publishers,
  Singapore, 2007.

\bibitem{FaMC12}
{\sc G.~E. Fasshauer and M.~J. Mc{C}ourt}, {\em Stable evaluation of {G}aussian
  radial basis function interpolants}, SIAM J. Sci. Comput., 34 (2012),
  pp.~A737--A762.

\bibitem{FFBook}
{\sc B.~Fornberg and N.~Flyer}, {\em {A Primer on Radial Basis Functions with
  Applications to the Geosciences}}, SIAM, Philadelphia, 2014.

\bibitem{FLF11}
{\sc B.~Fornberg, E.~Larsson, and N.~Flyer}, {\em Stable computations with
  {G}aussian radial basis functions}, SIAM J. Sci. Comput., 33 (2011),
  pp.~869--892.

\bibitem{Fuselier2008:StabilityNative}
{\sc E.~J. Fuselier}, {\em Improved stability estimates and a characterization
  of the native space for matrix-valued {RBF}s}, Adv. Comput. Math., 29 (2008),
  pp.~269--290.

\bibitem{Fuselier2008:VectorError}
{\sc E.~J. Fuselier}, {\em Sobolev-type approximation rates for divergence-free
  and curl-free {RBF} interpolants}, Math. Comp., 77 (2008), pp.~1407--1423.

\bibitem{FuselierEtAl2009:SphereDivFree}
{\sc E.~J. Fuselier, F.~J. Narcowich, J.~D. Ward, and G.~B. Wright}, {\em Error
  and stability estimates for surface-divergence free {RBF} interpolants on the
  sphere}, Math. Comp., 78 (2009), pp.~2157--2186.

\bibitem{FUSELIER201641}
{\sc E.~J. Fuselier, V.~Shankar, and G.~B. Wright}, {\em A high-order radial
  basis function ({RBF}) {L}eray projection method for the solution of the
  incompressible unsteady {S}tokes equations}, Comput. Fluids, 128 (2016),
  pp.~41--52.

\bibitem{FuselierWright2009:SphereDecomp}
{\sc E.~J. Fuselier and G.~B. Wright}, {\em Stability and error estimates for
  vector field interpolation and decomposition on the sphere with {RBF}s}, SIAM
  J. Numer. Anal., 47 (2009), pp.~3213--3239.

\bibitem{GriebelSchweitzer2002}
{\sc M.~Griebel and M.~A. Schweitzer}, {\em A particle-partition of unity
  method--part {II}: Efficient cover construction and reliable integration},
  SIAM J. Sci. Comput., 23 (2002), pp.~1655--1682.

\bibitem{Handscomb_1992_solenoidalTPS}
{\sc D.~Handscomb}, {\em Local recovery of a solenoidal vector field by an
  extension of the thin-plate spline technique}, Numer. Algorithms, 5 (1993),
  pp.~121--129.
\newblock Algorithms for approximation, III (Oxford, 1992).

\bibitem{HangelbroekEtAl:2012PolyharmI}
{\sc T.~Hangelbroek, F.~J. Narcowich, and J.~D. Ward}, {\em Polyharmonic and
  related kernels on manifolds: Interpolation and approximation}, Foundations
  of Computational Mathematics, 12 (2012), pp.~625--670.

\bibitem{HardinSaff:2004}
{\sc D.~P. Hardin and E.~B. Saff}, {\em {Discretizing manifolds via minimum
  energy points}}, Notices Amer. Math. Soc., 51 (2004), pp.~1186--1194.

\bibitem{HLWHAE2012}
{\sc U.~Harlander, T.~von Larcher, G.~B. Wright, M.~Hoff, K.~Alexandrov, and
  C.~Egbers}, {\em Orthogonal decomposition methods to analyze {PIV}, {LDA} and
  thermography data of a thermally driven rotating annulus laboratory
  experiment}, in Modelling Atmospheric and Oceanic flows: insights from
  laboratory experiments and numerical simulations, T.~von Larcher and P.~D.
  Williams, eds., American Geophysical Union, Washington D.C., 2014.

\bibitem{Larsson2017}
{\sc E.~Larsson, V.~Shcherbakov, and A.~Heryudono}, {\em A least squares radial
  basis function partition of unity method for solving {PDEs}}, SIAM J. Sci.
  Comput., 39 (2017), pp.~A2538--A2563.

\bibitem{LazzaroMontefusco}
{\sc D.~Lazzaro and L.~B. Montefusco}, {\em Radial basis functions for the
  multivariate interpolation of large scattered data sets}, J. Comp. Appl.
  Math., 140 (2002), pp.~521--536.

\bibitem{lowitzsch05-3}
{\sc S.~Lowitzsch}, {\em Error estimates for matrix-valued radial basis
  function interpolation}, J. Approx. Theory, 137 (2005), pp.~238--249.

\bibitem{MITRANO201594}
{\sc A.~A. Mitrano and R.~B. Platte}, {\em A numerical study of divergence-free
  kernel approximations}, Appl. Numer. Math., 96 (2015), pp.~94 -- 107.

\bibitem{NarcowichWard1994:GenHermite}
{\sc F.~J. Narcowich and J.~D. Ward}, {\em Generalized {H}ermite interpolation
  via matrix-valued conditionally positive definite functions}, Math. Comp., 63
  (1994), pp.~661--687.

\bibitem{NarcWardWright}
{\sc F.~J. Narcowich, J.~D. Ward, and G.~B. Wright}, {\em Divergence-free
  {RBFs} on surfaces}, J. Fourier Anal. Appl., 13 (2007), pp.~643--663.

\bibitem{Persson_distmeshpaper}
{\sc P.-O. Persson and G.~Strang}, {\em A simple mesh generator in {M}atlab},
  SIAM Rev., 46 (2004), pp.~329--345.

\bibitem{RiegerZwicknagl_samplinginequalities}
{\sc C.~Rieger and B.~Zwicknagl}, {\em Sampling inequalities for infinitely
  smooth functions, with applications to interpolation and machine learning},
  Adv. Comput. Math., 32 (2010), pp.~103--129.

\bibitem{Safdari-Vaighani2015}
{\sc A.~Safdari-Vaighani, A.~Heryudono, and E.~Larsson}, {\em A radial basis
  function partition of unity collocation method for convection--diffusion
  equations arising in financial applications}, J. Sci. Comput., 64 (2015),
  pp.~341--367.

\bibitem{ShankarKirbyFogelson}
{\sc V.~Shankar, R.~Kirby, and A.~Fogelson}, {\em Robust node generation for
  mesh-free discretizations on irregular domains and surfaces}, SIAM J. Sci.
  Comput., 40 (2018), pp.~A2584--A2608.

\bibitem{shankar_2018}
{\sc V.~Shankar and G.~B. Wright}, {\em Mesh-free semi-{Lagrangian} methods for
  transport on a sphere using radial basis functions}, J. Comput. Phys., 366
  (2018), pp.~170--190.

\bibitem{SWFKJSC2014}
{\sc V.~Shankar, G.~B. Wright, R.~M. Kirby, and A.~L. Fogelson}, {\em A radial
  basis function ({RBF})-finite difference ({FD}) method for diffusion and
  reaction-diffusion equations on surfaces}, J. Sci. Comput., 63 (2014),
  pp.~745--768.

\bibitem{Shcherbakov2016}
{\sc V.~Shcherbakov}, {\em Radial basis function partition of unity operator
  splitting method for pricing multi-asset {A}merican options}, {BIT}, 56
  (2016), pp.~1401--1423.

\bibitem{TRASK2018310}
{\sc N.~Trask, M.~Maxey, and X.~Hu}, {\em A compatible high-order meshless
  method for the {S}tokes equations with applications to suspension flows}, J.
  Comput. Phys., 355 (2018), pp.~310--326.

\bibitem{wendland_2002}
{\sc H.~Wendland}, {\em Fast evaluation of radial basis functions : {M}ethods
  based on partition of unity}, in Approximation Theory X: Wavelets, Splines,
  and Applications, Vanderbilt University Press, 2002, pp.~473--483.

\bibitem{WendlandScatteredData}
{\sc H.~Wendland}, {\em Scattered data approximation}, vol.~17 of Cambridge
  Monographs on Applied and Computational Mathematics, Cambridge University
  Press, Cambridge, 2005.

\bibitem{Wendland2009:DivFreeStokes}
{\sc H.~Wendland}, {\em Divergence-free kernel methods for approximating the
  {S}tokes problem}, SIAM J. Numer. Anal., 47 (2009), pp.~3158--3179.

\bibitem{SpherePts}
{\sc G.~B. Wright}, {\em {SpherePts}}.
\newblock \url{https://github.com/gradywright/spherepts/}, 2017.

\end{thebibliography}

\end{document}